\newtheorem*{rem}{Remark}
\newtheorem*{thm}{Theorem}
\declaretheorem[numberwithin=section]{Theorem}
\declaretheorem[numberwithin=section]{Definition}
\declaretheorem[numberwithin=section]{Lemma}
\declaretheorem[numberwithin=section]{Corollary}
\newcommand{\RR}[0]{\mathbb R}
\newcommand{\ZZ}[0]{\mathbb Z}
\newcommand{\CC}[0]{\mathbb C}
\newcommand{\FF}[0]{\mathbb F}
\newcommand{\TT}[0]{\mathbb T}
\newcommand{\cP}[0]{\mathcal P}
\newcommand{\cc}[0]{\textbf{\textit c}}
\newcommand{\vv}[0]{\textbf{\textit{v}}}
\newcommand{\rr}[0]{\textbf{\textit{r}}}
\newcommand{\one}[0]{\mathbf 1}
\renewcommand{\hat}[1]{\widehat{#1}}
\renewcommand{\bar}[1]{\overline{#1}}
\newcommand{\eps}[0]{\varepsilon}
\renewcommand{\mod}[0]{\text{ mod }}
\newcommand{\lr}[1]{\left(#1\right)}
\renewcommand{\quote}[1]{``#1''}
\begin{document}
\title{Character sums over Bohr sets}
\author[Brandon Hanson]{Brandon Hanson}
\address{University of Toronto\\
M5S 2E4 Toronto, Canada}
\email{bhanson@math.toronto.edu}
\allowdisplaybreaks
\maketitle
\begin{abstract}
We prove character sum estimates for additive Bohr subsets modulo a prime.
These estimates are analogous to classical character sum bounds of
P\'olya-Vinogradov and Burgess. These estimates are applied to obtain results on
recurrence mod $p$ by special elements.
\end{abstract}

\section{Introduction}
Let $p$ be a prime number and let $\FF_p$ be the finite field with $p$ elements.
A non-trivial multiplicative character modulo $p$ is a homomorphism
$\chi:\FF_p^\times\to\CC^\times$ which is non-constant. We may abuse notation
and view $\chi$ as a function on the integers defined by $n\mapsto \chi(n\mod p)$ and $\chi(n)=0$ when $p|n$. 
Given a subset $A\subset \FF_p$, we are interested in the sum
\[S(\chi)=\sum_{a\in A}\chi(a).\] Since $\chi$ takes values on the unit
circle, it is always true that $|S(\chi)|\leq |A|$ and when $A$ is a subgroup of
$\FF_p^\times$ this bound is best possible. However, for the typical set $A$
we expect that $|S(\chi)|$ is about $\sqrt{|A|}$. So, to some extent, the size
of $S(\chi)$ as a measure of multiplicative structure of $A$. For instance, the
number of solutions to $ab=cd$ with all variables in $A$ is given by
$\frac{1}{p-1}\sum_\chi |S(\chi)|^4$.

There are classical estimates for $S(\chi)$ when $A$ is an interval.
The first result in this direction is due independently to P\'olya and
Vinogradov. Before stating it, we recall Vinogradov's asymptotic notation. For sequences $X_n$ and $Y_n$,
we take $X_n\ll Y_n$ to mean that $X_n/Y_n\leq c$ for some constant $c$ provided $n$ is suffictiently
large (we shall also write $X_n=O(Y_n)$ to mean the same thing). For sequences $X_n$ and $Y_n$, we take $X_n=o(Y_n)$ to mean that
$X_n/Y_n\to 0$. The following results should be thought of as $p\to \infty$.

\begin{thm}[P\'olya-Vinogradov] 
Let $\chi$ be a non-trivial multiplicative
character modulo $p$. Then \[\left|\sum_{M\leq n\leq M+N}\chi(n)\right|\ll
\sqrt p\log p.\]
\end{thm}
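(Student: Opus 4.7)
The plan is to use additive Fourier analysis on $\FF_p$ to "complete" the sum over the interval, turning it into a sum involving Gauss sums of $\chi$, each of which has modulus exactly $\sqrt p$. The cost of completing the sum is the $L^1$-norm of the Fourier transform of the interval indicator, which is where the $\log p$ factor enters.

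First, I would write the interval indicator $\one_I$, where $I=\{M,\dots,M+N\}$, via discrete Fourier inversion on $\ZZ/p\ZZ$:
\[\one_I(n)=\frac{1}{p}\sum_{k=0}^{p-1}\hat{\one_I}(k)\,e_p(-kn),\qquad \hat{\one_I}(k)=\sum_{m=M}^{M+N}e_p(km),\]
where $e_p(x)=e^{2\pi i x/p}$. Substituting into $S(\chi)$ and swapping sums yields
\[S(\chi)=\frac{1}{p}\sum_{k=0}^{p-1}\hat{\one_I}(k)\sum_{n\in\FF_p}\chi(n)e_p(-kn).\]
The $k=0$ term vanishes by the non-triviality of $\chi$. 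For $k\neq 0$, the inner sum is $\bar\chi(-k)\tau(\chi)$ where $\tau(\chi)=\sum_n\chi(n)e_p(n)$ is the Gauss sum, so the standard identity $|\tau(\chi)|=\sqrt p$ gives
\[|S(\chi)|\leq \frac{\sqrt p}{p}\sum_{k=1}^{p-1}|\hat{\one_I}(k)|.\]

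Next I would estimate the Fourier coefficients as a geometric series:
\[|\hat{\one_I}(k)|=\left|\frac{\sin(\pi k(N+1)/p)}{\sin(\pi k/p)}\right|\ll \min\lr{N,\,\frac{1}{\|k/p\|}},\]
with $\|\cdot\|$ the distance to the nearest integer. Summing this bound over $1\leq k\leq p-1$ and using symmetry $k\leftrightarrow p-k$ reduces the problem to $\sum_{1\leq k\leq p/2}p/k\ll p\log p$. Combining with the previous display gives $|S(\chi)|\ll \sqrt p\log p$, as required.

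The proof is essentially mechanical once the Fourier-analytic setup is in place; the only mildly delicate point is verifying the evaluation $|\tau(\chi)|=\sqrt p$ for a non-trivial character, which I would cite as a standard fact rather than prove from scratch. The main conceptual observation—and really the "obstacle" to getting anything better by this method—is that the $\log p$ loss is intrinsic to the $L^1$-norm of the Fourier transform of an interval, so no amount of refinement of the Gauss sum bound will improve it; genuine improvements (such as Burgess's) require abandoning the completion step in favor of exploiting multiplicative structure directly.
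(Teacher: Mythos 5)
Your proposal is correct and is essentially the same argument the paper uses: complete the sum via Fourier inversion on $\FF_p$, bound the inner sum by the Gauss sum estimate $|\tau(\chi,-x)|=\sqrt p$, and control the resulting $L^1$-norm $\|\hat{\one_I}\|_1\ll\log p$ by the geometric-series/$\min(N,\|k/p\|^{-1})$ estimate. This is exactly the scheme the paper sets up at the start of Section 3 and carries out in the proof of Theorem \ref{PVBohr}; the remark there recovers the classical statement as the $d=1$ special case, which is precisely what you have written out.
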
 

This estimate is better than the trivial estimate provided $N\gg \sqrt p\log p$
and is simple to prove. In \cite{P}, Paley proved that the bound is in
fact nearly sharp. One needs to work harder to get non-trivial estimates for
shorter intervals. The best result in this direction is due to Burgess.

\begin{thm}[Burgess]
Let $\chi$ be a non-trivial multiplicative character modulo $p$. Then for any
positive integer $k$ and $\eps>0$ we have \[\left|\sum_{M\leq n\leq
M+N}\chi(n)\right|\ll_{k,\eps} N^{1-1/k}p^{(k+1)/4k^2+\eps}.\]
\end{thm}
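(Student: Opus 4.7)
The plan is to follow Burgess's classical argument: a combination of a shift trick, the multiplicativity of $\chi$, H\"older's inequality, and the Weil bound for character sums of polynomials. Fix parameters $X, Y \geq 1$ with $XY \leq N/2$, write $I = [M, M+N]$, and let $S = \sum_{n \in I} \chi(n)$.

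First, I would exploit translation invariance: for each $(x, y) \in [1, X] \times [1, Y]$, a change of variables shows that $S = \sum_{n \in I} \chi(n + xy) + O(XY)$, since the intervals $I$ and $I - xy$ differ in at most $2XY$ points. Averaging over $(x, y)$, using the factorization $\chi(n + xy) = \chi(y) \chi(x + ny^{-1})$ (valid for $y$ coprime to $p$), and taking absolute values yields
\[
XY |S| \leq \sum_{y=1}^Y \left| \sum_{n \in I} A(ny^{-1}) \right| + O(X^2 Y^2),
\]
where $A(t) = \sum_{x=1}^X \chi(t + x)$.

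Next I would apply H\"older's inequality in $y$ with exponent $2k$ and expand the $2k$-th power. Using multiplicativity once more, each summand becomes an incomplete character sum over $y \in [1, Y]$ of the form $\chi(f(y)/g(y))$, where $f$ and $g$ are polynomials of degree $k$ in $y$ determined by a tuple $(\vec n, \vec m, \vec x, \vec u) \in I^k \times I^k \times [1,X]^k \times [1,X]^k$. Weil's theorem controls the complete sum over $\FF_p$ by $O(k \sqrt p)$, and the incomplete sum over $[1,Y]$ by $O(k \sqrt p \log p)$ via P\'olya smoothing, except in the ``diagonal'' case where $f/g$ is a constant times a $d$-th power (with $d = \ord \chi$). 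This diagonal condition amounts to equality of multisets $\{-n_i/x_i\} = \{-m_j/u_j\}$ in $\FF_p$; when $NX < p$ it reduces to the integer equation $nu = mx$, whose solution count is bounded by $\ll NX (\log p)^{O(1)}$ via the divisor estimate $\sum_r \tau(r)^2 \ll r \log^3 r$, yielding a total diagonal contribution of $\ll_k (NX)^k (\log p)^{O(k)}$.

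Assembling the pieces gives an estimate of the form $|S| \ll_{k, \eps} F(X, Y, N, p) + XY$; choosing $X$ and $Y$ optimally---with $X$ roughly $p^{(3k-1)/(2k^2)}$ and $Y$ tuned to balance the $XY$ error against the main term---will yield the Burgess bound $N^{1-1/k} p^{(k+1)/(4k^2) + \eps}$. The hardest step will be the Weil-based moment bound in the previous paragraph: both the non-degeneracy analysis for the rational function $f/g$ and the completion of the incomplete sum via P\'olya smoothing are delicate, and the $\eps$ in the final exponent is precisely the logarithmic loss from the latter; the diagonal counting and parameter optimization are technical but routine.
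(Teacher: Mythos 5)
This theorem is stated in the paper only as classical background (with a pointer to Chapter 12 of \cite{IK}); the paper's own Burgess-type machinery is Lemma \ref{BasicBurgess}, which sets up the argument in the standard form. Comparing your sketch against that standard form reveals a genuine structural flaw in the H\"older step.

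After the shift and the factorization $\chi(n+xy)=\chi(y)\chi(ny^{-1}+x)$, the correct move is to bring the absolute values inside the $n$-sum, rewrite
\[
\sum_{y\le Y}\sum_{n\in I}\Bigl|\sum_{x\le X}\chi(ny^{-1}+x)\Bigr|
=\sum_{t\in\FF_p}\nu(t)\Bigl|\sum_{x\le X}\chi(t+x)\Bigr|,\qquad \nu(t)=|\{(n,y):ny^{-1}=t\}|,
\]
and apply H\"older to this multiplicity-weighted sum over $t\in\FF_p$. The point is that the resulting $2k$-th moment $\sum_{t\in\FF_p}|\sum_{x\le X}\chi(t+x)|^{2k}$ is a \emph{complete} sum over $\FF_p$, so Weil gives $2k\sqrt p$ per non-degenerate tuple with no completion loss, and the multiplicity factor $\sum_t\nu(t)^2$ is exactly where the divisor estimate enters. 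This is precisely the structure of Lemma \ref{BasicBurgess}, with the three H\"older factors being $\sum\nu$, $\sum\nu^2$, and the complete $2k$-th moment.

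You instead apply H\"older in the shift variable $y$ alone, which after expansion leaves you with sums $\sum_{y\le Y}\chi(f(y)/g(y))$ over the \emph{incomplete} range $[1,Y]$. That is fatal: in the Burgess regime one has $XY\le N$ with $N$ as small as $p^{1/4+\delta}$, so $Y<\sqrt p$. Then the P\'olya-completed Weil bound $O(k\sqrt p\log p)$ is strictly worse than the trivial bound $Y$, and the whole expansion yields nothing beyond the trivial estimate $\sum_y|\cdot|^{2k}\le(NX)^{2k}Y$. (Your proposed choice $X\approx p^{(3k-1)/2k^2}$ also cannot satisfy $XY\le N/2$ in that range, another sign the optimization is off.) The other ingredients of your sketch---the shift/averaging, the degeneracy analysis of $f/g$ as a $d$-th power, and the divisor-function count of the diagonal---are recognizably the right pieces, and your proof would be repaired by moving the absolute values inside the $n$-sum, introducing $\nu(t)$, and H\"oldering over $t\in\FF_p$ rather than over $y\in[1,Y]$. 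One smaller quibble: the unavoidable $p^\eps$ in the final exponent comes from the divisor-function (energy) bound $\sum\nu^2$, not from P\'olya smoothing of an incomplete Weil sum---in the correct arrangement there is no incomplete Weil sum to complete.
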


This result is better than trivial provided $N\gg p^{1/4+\delta}$ which
can be seen by taking the parameter $k$ to be sufficiently large.
Obtaining estimates for even shorter intervals remains a major open
problem in analytic number theory. The interested reader is
referred to Chapter 12 of \cite{IK}.

It is invariance under small translations that allows one to prove such
theorems. Similar theorems are proved for arithmetic progressions by the same
methods. In this paper we prove analogous theorems for sets exhibiting strong
additive structure, namely additive Bohr sets. 
\section{Statement of Results and Applications}
\subsection{Main Results}

Given a subset $\Gamma\subset \FF_p$ and a parameter $\eps>0$, we define the
Bohr set \[B=B(\Gamma,\eps)=\left\{x\in\FF_p:\left\|\frac{xr}{p}\right\|\leq \eps\text{ for each }r\in \Gamma\right\}.\] Here $\|\cdot\|$ denotes the
distance to the nearest integer. Elements $x\in B(\Gamma,\eps)$ dilate
$\Gamma$ into a short interval, and the additive structure of this interval
carries over to $B$. Bohr sets will be discussed further in Section 2.

In Section 3 we obtain the following analog of the P\'olya-Vinogradov estimate
which is non-trivial for large Bohr sets.

\begin{Theorem}[P\'olya-Vinogradov for Bohr sets]
\label{PVBohr}
Let $B=B(\Gamma,\eps)$ be a Bohr set with $|\Gamma|=d$. Then for any non-trivial
multiplicative character $\chi$ \[\left|\sum_{x\in B}\chi(x)\right|\ll_d \sqrt
p(\log p)^d.\]
\end{Theorem}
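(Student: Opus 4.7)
The plan is to mimic the classical P\'olya--Vinogradov argument: Fourier expand $\mathbf{1}_B$, use the Gauss sum estimate $|\tau(\chi)|=\sqrt p$ to evaluate the character sum, and reduce everything to controlling the $\ell^1$ norm of $\hat{\mathbf 1}_B$. The only genuinely new input is the bound $\|\hat{\mathbf 1}_B\|_1 \ll p(\log p)^d$, which will come from the fact that $B$ is an intersection of $d$ pulled-back short-interval conditions, so its Fourier transform factors as a convolution of $d$ pulled-back interval Fourier transforms.

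First I would use Fourier inversion on $\FF_p$ to write
\[
\sum_{x\in B}\chi(x)=\frac{1}{p}\sum_{r\in\FF_p}\hat{\mathbf 1}_B(r)\sum_{x\in\FF_p}\chi(x)e(xr/p).
\]
The inner sum vanishes at $r=0$ (since $\chi$ is non-trivial) and equals $\bar\chi(r)\tau(\chi)$ otherwise, where $\tau(\chi)$ is the standard Gauss sum with $|\tau(\chi)|=\sqrt p$. This immediately gives
\[
\left|\sum_{x\in B}\chi(x)\right|\le \frac{1}{\sqrt p}\,\|\hat{\mathbf 1}_B\|_1,
\]
so the theorem reduces to proving $\|\hat{\mathbf 1}_B\|_1 \ll_d p(\log p)^d$.

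For this, I would (assuming without loss of generality that $0\notin\Gamma$) write $\mathbf 1_B$ as a product: for each $r\in\Gamma$, let $\phi_r(x)$ be the indicator of the set $\{x\in\FF_p:\|xr/p\|\le\eps\}$. Then $\mathbf 1_B=\prod_{r\in\Gamma}\phi_r$, and the Fourier transform of a pointwise product on $\FF_p$ is $1/p$ times the convolution of the Fourier transforms, so
\[
\hat{\mathbf 1}_B=p^{-(d-1)}\,\hat\phi_{r_1}\ast\cdots\ast\hat\phi_{r_d}.
\]
Young's convolution inequality then yields $\|\hat{\mathbf 1}_B\|_1 \le p^{-(d-1)}\prod_{r\in\Gamma}\|\hat\phi_r\|_1$. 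Each $\phi_r$ is the composition of multiplication-by-$r$ (a bijection on $\FF_p$) with the indicator $J$ of the short ``interval'' $\{y:\|y/p\|\le\eps\}$, so a change of variables gives $\hat\phi_r(s)=\hat J(sr^{-1})$ and therefore $\|\hat\phi_r\|_1=\|\hat J\|_1$. A standard estimate shows $|\hat J(s)|\ll \min(\eps p,1/\|s/p\|)$, so $\|\hat J\|_1\ll p\log p$. Multiplying everything out gives $\|\hat{\mathbf 1}_B\|_1 \ll p(\log p)^d$, which, combined with the Fourier/Gauss-sum estimate above, yields the stated bound.

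I do not expect a serious obstacle here; the main care is in the handling of the pulled-back intervals (ensuring $r$ is invertible, and that the condition $\|xr/p\|\le\eps$ really does correspond to an interval modulo $p$ of length $O(\eps p)$ whose Fourier transform obeys the usual $\min(\eps p, 1/\|s/p\|)$ estimate). The implicit dependence on $d$ enters through the product $\prod_{r\in\Gamma}\|\hat\phi_r\|_1$, which is where the $(\log p)^d$ factor, and any absolute-constant$^d$, appear.
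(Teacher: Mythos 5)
Your proposal is correct and follows essentially the same route as the paper: both reduce the sum to $\sqrt p\,\|\hat{\one_B}\|_1$ via Fourier inversion and the Gauss sum, then write $\one_B$ as a product of $d$ pulled-back interval indicators and bound the $\ell^1$ norm of its Fourier transform by the product of the $\ell^1$ norms of the individual interval transforms, each of which is $\ll p\log p$. Your phrasing via Young's convolution inequality is just a repackaging of the paper's explicit computation of $\hat{\one_B}(y)$ as a sum over $\vv$ with $\vv\cdot\rr=y$ followed by the triangle inequality.
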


This result is comparable to \cite{S} in which a P\'olya-Vingradov estimate is
established for generalized arithmetic progressions of rank $d$. For
non-trivial estimates when the Bohr set is on the order of $\sqrt p$
or smaller, we appeal to Burgess' method. We are able to prove
non-trivial results provided the Bohr set satisfies a certain niceness
condition known as \emph{regularity}, see Definition \ref{RegularDefn}. 

\begin{Theorem}[Burgess for Bohr sets]
\label{BurgessBohr}
Let $B=B(\Gamma,\eps)$ be a regular Bohr set with $|\Gamma|=d$. Let $k\geq 1$ be an integer and
let $\chi$ be non-trivial multiplicative character. When
$|B|\geq \sqrt p$ we have the estimate \[\left|\sum_{x\in
B}\chi(x)\right|\ll_{k,d}|B|\cdot
p^{5d/16k^2+o(1)}\lr{\frac{|B|}{\eps^dp}}^{5/16k}\lr{\frac{p}{|B|}}^{-1/8k}.\] When
$|B|<\sqrt p$ we have the estimate \[\left|\sum_{x\in
B}\chi(x)\right|\ll_{k,d}|B|\cdot
p^{{5d/16k^2}+o(1)}\lr{\frac{|B|}{\eps^dp}}^{5/16k}\lr{\frac{|B|^5}{p^2}}^{-1/8k}.\]
\end{Theorem}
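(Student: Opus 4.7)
The plan is to adapt Burgess's method, replacing invariance of an interval under small integer translations with near-invariance of $B$ under shifts from a smaller Bohr set---a property supplied by regularity (Definition \ref{RegularDefn})---while retaining the multiplicative identity $\chi(x+hb')=\chi(b')\chi(xb'^{-1}+h)$ at the heart of Burgess's argument. I would fix an integer $H$ and an auxiliary regular Bohr set $B'=B(\Gamma,\eps')$ with $\eps'$ chosen much smaller than $\eps/(Hd)$. For $1\le h\le H$ and $b'\in B'$, the product $hb'$ lies in $B(\Gamma,H\eps')$, so the regularity of $B$ gives the translation estimate
\[
\sum_{x\in B}\chi(x)=\sum_{x\in B}\chi(x+hb')+O\!\left(\tfrac{Hd\eps'}{\eps}|B|\right).
\]

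Averaging this over $(h,b')\in[1,H]\times B'$, factoring $\chi(x+hb')=\chi(b')\chi(xb'^{-1}+h)$, and substituting $y\equiv xb'^{-1}\pmod p$ leads to an identity of Burgess type,
\[
H|B'|\,S(\chi)=\sum_{y\in\FF_p}a(y)T(y)+O\!\left(\tfrac{H^2d\eps'}{\eps}|B||B'|\right),
\]
where $a(y)=\sum_{b'\in B'}\chi(b')\one_B(yb')$ and $T(y)=\sum_{h=1}^H\chi(y+h)$. The multiplicative twist $\chi(b')$ is absorbed into $a(y)$, whose modulus is dominated by the nonnegative counting function $N(y)=\#\{b'\in B':yb'\in B\}$. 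H\"older's inequality with conjugate exponents $2k/(2k-1)$ and $2k$ then separates the moments of $N$ and $T$:
\[
\sum_y N(y)|T(y)|\le\lr{\sum_y N(y)^{\frac{2k}{2k-1}}}^{\!\frac{2k-1}{2k}}\lr{\sum_y|T(y)|^{2k}}^{\!\frac{1}{2k}}.
\]
The Weil bound gives $\sum_y|T(y)|^{2k}\ll_k pH^k+p^{1/2}H^{2k}$, whose two summands are responsible for the two cases $|B|<\sqrt p$ and $|B|\ge\sqrt p$ of the theorem; the $N$-moment is controlled using $\sum_y N(y)=|B||B'|$ together with the pointwise bound $N(y)\le|B'|$, and the standard size estimate $|B'|\gg\eps'^d p$ for regular Bohr sets of rank $d$.

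The main technical obstacle is a three-way optimization over $H$, $\eps'$, and $k$. The parameter $\eps'$ must be small enough that the regularity error is dominated by the main term, yet large enough that $|B'|$ supplies meaningful averaging; $H$ must satisfy $Hd\eps'\ll\eps$ while still producing useful cancellation via Weil. Balancing these constraints---together with the factor $|B|/(\eps^d p)$ measuring the deficit of $B$ from the typical Bohr set density $\eps^d$---yields the exponents $5/(16k)$ and $-1/(8k)$ appearing in the stated bound.
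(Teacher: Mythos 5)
Your overall skeleton—replace the interval's translation invariance by regularity of $B$, shift by multiples of elements of a smaller Bohr set $B'$, use the multiplicativity of $\chi$ to split off a short complete sum $T(y)$, apply H\"older, and control $T$ by Weil—matches the paper. But the heart of Burgess's method, and the step where this paper actually has to do new work, is the estimation of the moment of the counting function $N(y)$ (equivalently $r(x)$), and that is precisely where your sketch is wrong.

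You propose to control the $N$-moment using only $\sum_y N(y)=|B||B'|$ and the pointwise bound $N(y)\le|B'|$. Those are the trivial bounds, and they cannot produce any saving: they would give $\sum_y N(y)^{2k/(2k-1)}\le |B||B'|^{2k/(2k-1)}$, and feeding this into your H\"older step and the Weil bound $\sum_y|T(y)|^{2k}\ll pH^k+\sqrt p\,H^{2k}$ with $H\approx p^{1/2k}$ yields $|S(\chi)|\ll |B|\,(p/|B|^2)^{1/4k}$, which for $|B|<\sqrt p$ is \emph{worse} than trivial. In the classical setting the analogous moment is controlled by a divisor/counting argument specific to intervals; for Bohr sets that argument is unavailable, and the replacement is a genuine sum-product input. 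The paper's proof isolates $\sum_x r(x)^2=E_\times(B,(B')^{-1})$ via a three-term H\"older split with exponents $(\,k/(k-1),\,2k,\,2k\,)$ (Lemma \ref{BasicBurgess}), and then bounds this multiplicative energy by Rudnev's theorem $E_\times(A)\ll|A||A+A|^{7/4}\log|A|$ together with the Bohr-set doubling bound $|B+B|\le 4^d|B|$ (Corollary \ref{BohrDoubling}). The exponent $5/16k$ in the statement comes exactly from the arithmetic $-3/(4k)+(7/4)\cdot(1/(4k))=-5/(16k)$, i.e.\ from Rudnev's $7/4$. Your two-term H\"older does not even produce the $L^2$ moment $\sum N(y)^2$, and nothing in your proposal mentions multiplicative energy, sum-product, or the small-doubling of Bohr sets; this is a gap that cannot be repaired by tuning $H$, $\eps'$, and $k$, since without the energy estimate there is no source of cancellation beyond Weil.

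A smaller issue: you also need the case distinction to be driven by splitting $B$ into pieces of size about $\sqrt p$ when $|B|\ge\sqrt p$ (as the paper does), rather than by which term in the Weil bound dominates; Rudnev's energy estimate only applies to sets of size $<\sqrt p$, so this splitting is structural, not cosmetic.
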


The statement appears complicated, but usually one has $|B|\approx
\eps^dp$ so the middle factor in the estimate is harmless. If the rank $d$ is
bounded, one can take $k$ much larger than $d$ and obtain a non-trivial estimate in the range $|B|\gg p^{{2/5}+\delta}$ for some positive $\delta$. This is
comparable to character sum estimates of M.-C. Chang for generalized arithmetic
progressions of comparable rank prove in \cite{C}. As in her proof, we make use
of sum-product phenomena in $\FF_p$.

\subsection{Applications}

Recall Dirichlet's approximation theorem
states that for real numbers $\alpha_1,\ldots,\alpha_d$ there is an integer $n\leq Q$ so that
$\max_k\{\|n\alpha_k\|\}\leq Q^{-1/d}$. Schmidt proved in \cite{Schmidt} that, at
the cost of weakening the approximation, we can take $n$ to be a perfect square.
Specifically, he proved the following.
\begin{thm}
Given real numbers $\alpha_1,\ldots,\alpha_d$ and $Q$ a positive integer, the is
an integer $1\leq n\leq Q$ and a positive absolute constant $c$ such that
\[\max_{1\leq k\leq d}\{\|n^2\alpha_k\|\}\ll d Q^{-c/d^2}.\]
\end{thm}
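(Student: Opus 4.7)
The plan is to follow Schmidt's original approach, reducing the existence of a good $n$ to a discrepancy estimate for the sequence $\xx_n=(n^2\alpha_1,\ldots,n^2\alpha_d)$ in $(\RR/\ZZ)^d$ and then bounding that discrepancy via the Erd\H{o}s--Tur\'an--Koksma inequality together with Weyl's bound for quadratic exponential sums. Setting $\delta$ of size $d Q^{-c/d^2}$, it suffices to exhibit some $1\leq n\leq Q$ for which $\xx_n$ lies in the centered box of side length $2\delta$. Such an $n$ must exist provided the discrepancy $D_Q$ of $\{\xx_n\}_{n=1}^Q$ against axis-parallel boxes is strictly less than $(2\delta)^d$, so the task reduces to a quantitative discrepancy estimate.

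Next I would invoke the $d$-dimensional Erd\H{o}s--Tur\'an--Koksma inequality: for any $T\geq 1$,
\[D_Q\ll_d \frac{1}{T}+\frac{1}{Q}\sum_{\substack{\mathbf{t}\in\ZZ^d\\ 0<\|\mathbf{t}\|_\infty\leq T}}\frac{1}{r(\mathbf{t})}\left|\sum_{n=1}^Q e\lr{n^2(t_1\alpha_1+\cdots+t_d\alpha_d)}\right|,\]
where $r(\mathbf{t})=\prod_k\max(1,|t_k|)$. For each nonzero $\mathbf{t}$ I would then bound the quadratic Weyl sum with $\beta=t_1\alpha_1+\cdots+t_d\alpha_d$: Dirichlet approximation furnishes coprime $a,q$ with $|\beta-a/q|\leq 1/(qQ)$ and $q\leq Q^2$, whence
\[\left|\sum_{n=1}^Q e(n^2\beta)\right|\ll Q^{1+o(1)}\lr{\frac{1}{q}+\frac{1}{Q}+\frac{q}{Q^2}}^{1/2}.\]
Summing these bounds over $\mathbf{t}$ weighted by $1/r(\mathbf{t})$ costs only a $(\log T)^d$ factor.

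The hard part is that Weyl's bound degenerates whenever the linear form $t_1\alpha_1+\cdots+t_d\alpha_d$ is abnormally close to a rational of small denominator. Schmidt's device is to isolate such exceptional $\mathbf{t}$: a small-denominator resonance shared by several frequencies forces the $\alpha_k$'s to satisfy a near-rational linear relation, which lets one project the problem onto a lower-rank sublattice of frequencies and iterate the argument (or alternatively handle the exceptional set by a direct volume argument on the subtorus on which the $\alpha_k$'s concentrate). With $T$ chosen on the order of $Q^{c/d^2}$ and the Weyl parameter $q$ balanced against it, the cumulative bound on $D_Q$ beats $(2\delta)^d$, producing the required $n$; the factor of $d$ in the final estimate reflects the $d$-dimensional volume of the target box and the logarithmic losses aggregated across $\mathbf{t}$.
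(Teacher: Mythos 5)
The paper does not prove this theorem; it quotes it as Schmidt's result and points to \cite{Schmidt} (with alternative proofs in \cite{GT}, \cite{LM}, \cite{CLR}), so there is no in-paper argument to compare against. Judged on its own terms, your outline---reduce to a discrepancy estimate for the sequence $(n^2\alpha_1,\ldots,n^2\alpha_d)$, majorize the discrepancy with Erd\H{o}s--Tur\'an--Koksma, and control the quadratic Weyl sums $\sum_{n\leq Q}e\bigl(n^2(t_1\alpha_1+\cdots+t_d\alpha_d)\bigr)$ via Weyl differencing and Dirichlet approximation---is in the spirit of Schmidt's analytic proof, and the ingredients you write out (the discrepancy reduction, the ETK inequality, the Weyl--Dirichlet bound) are all stated correctly.

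There is, however, a genuine gap at exactly the point you label ``the hard part.'' If some small frequency $\mathbf{t}$ (say $\mathbf{t}=(1,0,\ldots,0)$) has $t_1\alpha_1+\cdots+t_d\alpha_d$ within $1/Q$ of a rational with very small denominator, Weyl's bound is vacuous for that term, and since $r(\mathbf{t})=1$ that single frequency already contributes $\asymp 1$ to the Erd\H{o}s--Tur\'an--Koksma majorant, so the discrepancy bound collapses for every choice of $T$. The theorem is still true in that regime, but for a different reason: one must \emph{leave} the discrepancy framework, choose $n$ to be a multiple of the small denominator, split off that coordinate, and restart on the remaining coordinates---not merely ``project onto a lower-rank sublattice and iterate'' inside the ETK estimate, which the resonance has already broken. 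Carrying out this dichotomy quantitatively---fixing the resonance threshold, tracking the drop in dimension at each projection, and verifying that the losses accumulated over at most $d$ descents land precisely on the exponent $c/d^2$ with the prefactor $d$---is the real content of Schmidt's proof and is absent from the sketch. You have correctly identified the tools and the obstacle, but the argument that overcomes the obstacle is deferred rather than given, so what you have is an outline rather than a proof.
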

This result was also proved by Green and Tao in \cite{GT} and extended to
different systems of polynomials in \cite{LM}. An elementary proof of a slightly
weaker estimate was also given in \cite{CLR}.

When $\Gamma$ is a subset of $\FF_p$ and $\eps>0$ then the elements of
$B(\Gamma,\eps)$ are precisely the elements guaranteed by Dirichlet's
approximation theorem. Here we are replacing approximation in the continuous
torus $\RR/\ZZ$ with approximation in the discrete torus $\FF_p$. We will prove
the following $\FF_p$ analog of Schmidt's theorem.

\begin{Theorem}[Recurrence of $k$'th powers]
\label{KthPowers}Let $\Gamma$ be a set of $d$ integers and let $p$ be a prime.
There is an integer $x\leq p$ for which \[\max_{r\in
\Gamma}\left\{\left\|x^k\frac{r}{p}\right\|\right\}\ll_{d} p^{{-1/2d}}\log
p\cdot k^{1/d}.\]
\end{Theorem}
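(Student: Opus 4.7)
The plan is to show that the subgroup $H=\FF_p^{\times k}$ of $k$-th powers meets every Bohr set $B=B(\Gamma,\eps)$ as soon as $\eps$ exceeds a constant (depending only on $d$) times $k^{1/d}\log p/p^{1/(2d)}$. Picking any $x\in\{1,\dots,p-1\}$ with $x^k\bmod p\in B$ then yields $\max_{r\in\Gamma}\|x^k r/p\|\leq \eps$, which is the required inequality.

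To count $|B\cap H|$, I would write the indicator of $H$ on $\FF_p^\times$ as an average of the characters of $\FF_p^\times/H$. Setting $d'=\gcd(k,p-1)$, orthogonality gives
\[|B\cap H|=\frac{1}{d'}\sum_{\chi^{d'}=\chi_0}\sum_{y\in B}\chi(y).\]
The principal character contributes at least $(|B|-1)/d'$, and each of the remaining $d'-1$ characters is non-trivial, so its inner sum is bounded by $C_d\sqrt p(\log p)^d$ by Theorem \ref{PVBohr}. Consequently
\[|B\cap H|\;\geq\;\frac{|B|}{d'}-C_d\sqrt p(\log p)^d-1.\]

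Next I would invoke the standard lower bound $|B(\Gamma,\eps)|\gg_d\eps^d p$. This is a short Dirichlet/pigeonhole argument: partition the torus $(\RR/\ZZ)^d$ into $O(1/\eps)^d$ boxes of side $\eps$, observe that some box contains at least $\eps^d p$ of the $p$ vectors $(xr/p)_{r\in\Gamma}$, and then take differences with a fixed representative in that box to produce at least $\eps^d p$ elements of $B$. Combining this with the previous display and using $d'\leq k$, we see that $|B\cap H|\geq 1$ as soon as
\[\eps^d p\;\gg_d\;k\sqrt p(\log p)^d,\qquad\text{equivalently}\qquad \eps\;\gg_d\;\frac{k^{1/d}\log p}{p^{1/(2d)}}.\]
Taking $\eps$ to be a sufficiently large $d$-dependent constant multiple of the right-hand side, any $x\in\{1,\dots,p-1\}$ with $x^k\in B$ satisfies the asserted bound; when the proposed $\eps$ exceeds $1/2$ the statement is vacuous, so nothing is lost by assuming $\eps$ small.

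The only genuine obstacle is the lower bound $|B|\gg_d\eps^d p$ with a constant independent of $p$. Once this standard fact from the Bohr-set toolkit is quoted, the rest is a routine marriage of Theorem \ref{PVBohr} with character orthogonality, exactly mirroring the classical deduction that every interval of length $\gg\sqrt p\log p$ contains a quadratic residue. Note that the Burgess-type estimate Theorem \ref{BurgessBohr} is not needed here, since P\'olya--Vinogradov already beats the trivial bound at the scale $\eps^d p\asymp\sqrt p(\log p)^d$ that governs the optimum.
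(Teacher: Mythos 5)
Your argument is correct and follows essentially the same route as the paper: expand the indicator of the $k$'th powers as an average over the $\gcd(k,p-1)$ characters trivial on that subgroup, bound the non-principal contributions using Theorem \ref{PVBohr}, and conclude $B\cap H\neq\emptyset$ by combining with the lower bound $|B|\geq\eps^d p$ from Lemma \ref{BohrSize}. The only differences are cosmetic (you track the $-1$ from $0\in B$ explicitly and sketch the pigeonhole proof of the Bohr-set size bound, which the paper simply cites).
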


In a similar fashion, we can prove a result about recurrence of primitive roots. 
\begin{Theorem}[Recurrence of primitive roots]
\label{PrimitiveRoots}Let $\Gamma$ be a set of $d$ integers and let $p$ be a prime.
There is an integer $1<x<p$ which generates $\FF_p^\times$ and such that
\[\max_{r\in \Gamma}\left\{\left\|x\frac{r}{p}\right\|\right\}\ll_{d}
\frac{p^{{1/2d}}\log p}{\phi(p-1)^{1/d}}.\]
\end{Theorem}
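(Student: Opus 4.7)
The plan is to find $\eps$ of the claimed form for which the Bohr set $B(\Gamma,\eps)$ contains a primitive root of $\FF_p^\times$; any such primitive root supplies the required integer $1<x<p$.

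To count primitive roots inside $B$, I would invoke the standard Mobius/character detector
\[\one_{\mathrm{prim}}(x) \;=\; \frac{\phi(p-1)}{p-1}\sum_{d\mid p-1}\frac{\mu(d)}{\phi(d)}\sum_{\ord(\chi)=d}\chi(x).\]
Summing over $x\in B$ isolates the $d=1$ (trivial character) contribution as the main term $\tfrac{\phi(p-1)}{p-1}|B|$; all other characters appearing are non-trivial, so Theorem~\ref{PVBohr} bounds each sum $\sum_{x\in B}\chi(x)$ by $O_d(\sqrt p\,(\log p)^d)$. Using that there are $\phi(d)$ characters of each order $d$ and that $|\mu(d)/\phi(d)|\le 1/\phi(d)$, the aggregate error is at most $O_d(\tau(p-1)\sqrt p\,(\log p)^d)$ times $\phi(p-1)/(p-1)$, where $\tau(p-1)$ is the divisor-count.

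For existence, I would bound $|B|$ from below via the elementary pigeonhole estimate $|B(\Gamma,\eps)|\gg_d \eps^d p$ (inscribe a box of side $\eps$ in the torus dual to $\Gamma$; no regularity required). The main term strictly dominates the error as soon as $\eps^d\gg_d \tau(p-1)(\log p)^d/\sqrt p$, and since $\tau(p-1)\le p^{o(1)}$ this is comfortably satisfied by the choice
\[\eps \;=\; C_d\cdot\frac{p^{1/2d}\log p}{\phi(p-1)^{1/d}}\]
for $C_d$ sufficiently large. As the only element of $B$ that is never a primitive root for $p>3$ is $x=1$ (the tiny primes being handled directly), the primitive root produced automatically satisfies $1<x<p$.

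The main obstacle is the bookkeeping with the divisor factor $\tau(p-1)$: in the worst case it can grow faster than any power of $\log p$, yet it must be absorbed into the implicit constant $C_d$ (or equivalently into a mild inflation of the $\log p$ factor in the final bound). Once this is acknowledged, the argument reduces cleanly to Theorem~\ref{PVBohr} combined with Mobius inversion on $\FF_p^\times$ and the standard volume lower bound for Bohr sets, mirroring the parallel deduction of Theorem~\ref{KthPowers} from the same character-sum input.
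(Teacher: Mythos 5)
Your proposal follows the paper's argument step for step: Vinogradov's expansion of $\one_\cP$ in terms of multiplicative characters, the trivial character giving the main term $\tfrac{\phi(p-1)}{p-1}|B|$, Theorem~\ref{PVBohr} applied to each non-trivial character, and the pigeonhole bound $|B(\Gamma,\eps)|\geq \eps^d p$ to make the main term dominate. You are in fact more scrupulous than the paper's own write-up in one respect: after bounding each $|S(\chi)|$ uniformly, the sum over divisors produces the aggregate factor $\sum_{m\mid p-1}|\mu(m)|$ (the number of squarefree divisors of $p-1$), which is bounded by $\tau(p-1)$, whereas the paper's displayed error estimate
\[\Bigl| |B\cap\cP| - |B|\tfrac{\phi(p-1)}{p-1}\Bigr| \ll_d \sqrt p(\log p)^d\]
silently drops this factor.

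The difficulty is with your final claim that this factor is ``comfortably'' absorbed by the choice $\eps = C_d\, p^{1/2d}\log p/\phi(p-1)^{1/d}$. After cancelling the common $\phi(p-1)/(p-1)$ from main term and error, the condition you need is $\eps^d \gg \bigl(\sum_{m\mid p-1}|\mu(m)|\bigr) p^{-1/2}(\log p)^d$. With the proposed $\eps$ this becomes $p/\phi(p-1)\gg \sum_{m\mid p-1}|\mu(m)|$. But
\[\frac{p-1}{\phi(p-1)}=\prod_{q\mid p-1}\frac{q}{q-1}\;\leq\;\prod_{q\mid p-1}2\;=\;\sum_{m\mid p-1}|\mu(m)|,\]
with strict inequality as soon as $p-1$ has an odd prime factor. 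So the inequality you need fails for essentially every prime, and no choice of $C_d$ (which cannot depend on $p$) rescues it. The divisor factor is not a harmless $p^{o(1)}$ that can be swept into the constant: it must appear explicitly, giving $\eps \gg \bigl(\sum_{m\mid p-1}|\mu(m)|\bigr)^{1/d} p^{-1/2d}\log p$, which is weaker than the theorem's stated bound by a factor of roughly $\bigl(2^{\omega(p-1)}\phi(p-1)/p\bigr)^{1/d}\geq 1$. You were right to flag the issue, but the resolution you sketch does not close it; the same gap is present in the paper's proof as written, and repairing it requires either retaining the extra factor in the final bound or finding genuine cancellation among the characters of each fixed order.
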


The remainder of this article is structured as follows. In the next section we
recall necessary facts from Fourier analysis in $\FF_p$, character sums, Bohr
sets and their properties, and sum-product theory. In Section 3
we give the proof of Theorem \ref{PVBohr} and in Section 4 the proof of
Theorem \ref{BurgessBohr}. In Section 5 we present the applications to
recurrence.

\section*{Acknowledgements}

The author would like to thank John Friedlander for helpful discussion. He would
also like to thank the anonymous referee for helpful comments and suggestions.
Part of this research was performed while the author was visiting the Intitute for Pure and Applied Mathematics (IPAM) which is funded by the National Science
Foundation.

\section{Preliminaries}

In this section we describe necessary results from discrete Fourier analysis,
character sums, the theory of Bohr sets, and sum-product theory in $\FF_p$.

\subsection{Discrete Fourier Analysis}
The results in this section are standard. The interested reader is
referred to Chapter 4 of \cite{TV}. We define 
$e_p(a)=e^{2\pi i a/p}$ which is $p$-periodic as a function on $\ZZ$ and
so well-defined on $\FF_p$. For $f:\FF_p\to \CC$ and $q\geq 1$ we have the $L^q$
norm \[\|f\|_q=\lr{\frac{1}{p}\sum_{x\in \FF_p}|f(x)|^q}^{1/q}.\]
The Fourier transform of a function $f$ at $t\in\FF_p$ is
defined as \[\hat f(t)=\sum_{x\in \FF_p}f(x)e_p(-tx).\]

\begin{Lemma}[Properties of the Fourier Transform]
Let $f,g:\FF_p\to\CC$, then we have
\begin{enumerate}
  \item Fourier inversion: $f(x)=\frac{1}{p}\sum_{t\in\FF_p}\hat
  f(t)e_p(tx)$.
  \item Parseval's identity: $\sum_{x\in G}f(x)\bar{
  g(x)}=\frac{1}{p}\sum_{t\in\FF_p}\hat f(t)\bar{\hat g(t)}$.
  \item Plancherel's identity: $\sum_{x\in
  G}|f(x)|^2=\frac{1}{p}\sum_{t\in\FF_p}|\hat f(t)|^2$.
\end{enumerate}
\end{Lemma}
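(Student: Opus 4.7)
The plan is to derive all three identities from a single orthogonality relation for the additive characters $e_p$, which is the standard route. Specifically, I would first establish that
\[
\sum_{t\in\FF_p}e_p(ta)=\begin{cases}p&\text{if }a\equiv 0\pmod p,\\ 0&\text{otherwise,}\end{cases}
\]
by summing the geometric series: when $a\not\equiv 0\pmod p$ the ratio $e_p(a)\neq 1$, so $(e_p(a)-1)\sum_t e_p(ta)=e_p(pa)-1=0$. Equivalently, one can invoke the fact that $t\mapsto e_p(ta)$ is a nontrivial character of the additive group $\FF_p$ for $a\not\equiv 0$, hence sums to zero. This orthogonality lemma is the only nontrivial ingredient; everything else is bookkeeping.

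Next I would prove Fourier inversion directly. Expand using the definition of $\hat{f}$, swap the order of summation (both sums are finite, so this is automatic), and collapse via orthogonality:
\[
\frac{1}{p}\sum_{t\in\FF_p}\hat{f}(t)e_p(tx)
=\frac{1}{p}\sum_{y\in\FF_p}f(y)\sum_{t\in\FF_p}e_p(t(x-y))
=\sum_{y\in\FF_p}f(y)\one_{x=y}=f(x).
\]

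For Parseval's identity, I would apply inversion to $f$ inside the sum $\sum_x f(x)\overline{g(x)}$, interchange summation, and recognize the inner sum over $x$ as the complex conjugate of $\hat{g}(t)$:
\[
\sum_{x\in\FF_p}f(x)\overline{g(x)}
=\frac{1}{p}\sum_{t\in\FF_p}\hat{f}(t)\sum_{x\in\FF_p}\overline{g(x)}e_p(tx)
=\frac{1}{p}\sum_{t\in\FF_p}\hat{f}(t)\overline{\hat{g}(t)}.
\]
Plancherel's identity then falls out by specializing $g=f$.

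There is no real obstacle here; the only place to be slightly careful is to confirm that the orthogonality relation is genuinely giving $p\cdot\one_{x=y}$ (rather than, say, $\one_{x=y}$) and that the normalization $\frac{1}{p}$ in the inversion formula is therefore consistent with the unnormalized definition of $\hat{f}$ used in the paper. Once that bookkeeping is pinned down, all three identities follow from a single application of swap-and-orthogonality.
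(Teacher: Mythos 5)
Your proof is correct and is exactly the standard argument: orthogonality of the additive characters $e_p$, then inversion by swap-and-collapse, Parseval by applying inversion inside the pairing, and Plancherel as the case $g=f$. The paper itself gives no proof (it cites these facts as standard from Chapter 4 of \cite{TV}), and your derivation, including the check that the unnormalized transform $\hat f(t)=\sum_x f(x)e_p(-tx)$ matches the $\frac{1}{p}$ normalization in the inversion formula, is precisely the intended justification.
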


\subsection{Character sums}
Here we recall well-known facts concerning complete character sums over finite
fields. For details, we refer to Chapter 11 of \cite{IK}. Suppose $\chi$ is
a non-trivial multiplicative character. For $x\in \FF_p$ the Fourier transform of $\chi$ at $x$ is
\[\tau(\chi,-x)=\sum_{y\in \FF_p}\chi(y)e_p(-xy)\] which is known as the Gauss
sum.
By expanding the square modulus, it is not hard to prove the following.

\begin{Lemma} 
For non-zero $x\in \FF_p$ we have \[|\tau(\chi,-x)|=\sqrt p\] and
$\tau(\chi,0)=0$.
\end{Lemma}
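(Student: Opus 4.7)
The claim $\tau(\chi,0)=0$ is the statement that $\sum_{y\in \FF_p}\chi(y)=0$, which is a standard orthogonality fact: since $\chi$ is non-trivial, there is some $y_0\in \FF_p^\times$ with $\chi(y_0)\neq 1$, and multiplying the sum by $\chi(y_0)$ permutes $\FF_p^\times$, so $\chi(y_0)\sum_y\chi(y)=\sum_y\chi(y)$ forces the sum to vanish. I would state this in a line and move on.

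For the main claim, the plan is to follow the hint and expand the square of the modulus. Write
\[|\tau(\chi,-x)|^2=\sum_{y_1,y_2\in \FF_p}\chi(y_1)\bar{\chi(y_2)}e_p(-x(y_1-y_2)).\]
Since $\chi$ vanishes at $0$, one may restrict to $y_1,y_2\in \FF_p^\times$. The key step is the change of variables $y_1=zy_2$ with $z\in \FF_p^\times$, which uses $\bar{\chi(y_2)}\chi(y_2)=1$ to collapse $\chi(zy_2)\bar{\chi(y_2)}$ down to $\chi(z)$, giving
\[|\tau(\chi,-x)|^2=\sum_{z\in \FF_p^\times}\chi(z)\sum_{y_2\in \FF_p^\times}e_p(-xy_2(z-1)).\]

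Now I would split the inner sum according to whether $z=1$ or not. The $z=1$ contribution is simply $p-1$. For $z\neq 1$, the hypothesis $x\neq 0$ makes $-x(z-1)$ a nonzero element of $\FF_p$, so completing the $y_2$-sum to all of $\FF_p$ and subtracting off the $y_2=0$ term gives $0-1=-1$. Combining these and then invoking $\sum_{z\in \FF_p^\times}\chi(z)=0$ once more to handle the remaining $z$-sum yields $|\tau(\chi,-x)|^2=(p-1)-\sum_{z\neq 0,1}\chi(z)=(p-1)-(-\chi(1))=p$, as desired.

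There is no real obstacle here; the only thing to be careful about is the bookkeeping of the support of $\chi$ and the distinction between the sum over $\FF_p$ and over $\FF_p^\times$ when invoking the standard identity $\sum_{y\in \FF_p}e_p(ty)=p\cdot \one_{t=0}$. I would present the whole argument in about five lines of display math.
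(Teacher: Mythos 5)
Your proof is correct and follows exactly the route the paper indicates (``By expanding the square modulus, it is not hard to prove the following''); the paper itself leaves the computation as a routine exercise, and your change of variables $y_1=zy_2$ followed by splitting off $z=1$ and using orthogonality is the standard way to carry out that hint.
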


In the proof of Theorem \ref{BurgessBohr} we shall need Weil's estimate for
character sums with polynomial arguments. 

\begin{thm}[Weil]
Let $f\in\FF_p[x]$ be a polynomial with $r$ distinct roots over $\bar{\FF_p}$.
Then if $\chi$ has order $l$ and provided $f$ is not an $l$'th power over
$\bar{\FF_p}[x]$ we have
\[\left|\sum_{x\in\FF_p}\chi(f(x))\right|\leq r\sqrt p.\]
\end{thm}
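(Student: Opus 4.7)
The plan is to express $S = \sum_{x \in \FF_p} \chi(f(x))$ as a piece of the $\FF_p$-point count on the Kummer curve $C : y^l = f(x)$, and then invoke the Hasse--Weil bound for smooth projective curves over finite fields.

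First, assume for simplicity that $l \mid p-1$, so that $\FF_p$ contains all $l$th roots of unity; the general case is recovered by replacing $l$ with $\gcd(l,p-1)$. For each $a\in\FF_p$ the number of $y \in \FF_p$ solving $y^l = a$ equals $\sum_{\psi^l = 1} \psi(a)$, with the convention $\psi(0) = 0$ for $\psi$ non-trivial and $\psi(0)=1$ for $\psi$ trivial. Summing in $x$ gives
\[\#\{(x,y) \in \FF_p^2 : y^l = f(x)\} = p + \sum_{\substack{\psi^l = 1 \\ \psi \neq 1}} \sum_{x \in \FF_p} \psi(f(x)).\]
Repeating this identity with $f$ replaced by $f^j$ for $j=1,\ldots,l-1$ produces $l-1$ linear relations among the $l-1$ sums $T_j = \sum_x \chi^j(f(x))$, and orthogonality of the characters of order dividing $l$ lets us isolate $T_1 = S$.

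Next, the hypothesis that $f$ is not an $l$th power in $\bar{\FF_p}[x]$ is precisely the condition that the affine curve $y^l = f(x)$ is geometrically irreducible, so it admits a smooth projective model $\tilde C$. Applied to the cyclic $l$-fold cover $\tilde C \to \mathbb{P}^1_x$, ramified at the $r$ distinct roots of $f$ (and possibly at $\infty$), Riemann--Hurwitz gives a genus bound of the shape $g \leq (l-1)(r-1)/2$. The Hasse--Weil bound then yields $|\#\tilde C(\FF_p) - (p+1)| \leq 2g\sqrt p$, and the difference between the projective count and the affine count is $O(r)$, contributed by the fibers over the roots of $f$ and over $\infty$. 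Tracking constants carefully across the orthogonality step converts this into $|S| \leq r\sqrt p$.

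The main obstacle is of course the Hasse--Weil bound itself, i.e.\ Weil's Riemann hypothesis for curves over finite fields; this is the deep input and would be cited as a black box. The more routine but still delicate bookkeeping lies in the genus computation via Riemann--Hurwitz---where ramification at $\infty$ depends on $\gcd(l,\deg f)$---and in arranging that the final constant reduces to $r$ rather than something like $(l-1)(r-1)$ after the orthogonality inversion isolates a single character sum.
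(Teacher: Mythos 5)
The paper does not prove this statement; it is quoted as a classical result with a pointer to Chapter 11 of Iwaniec--Kowalski, so there is no in-paper proof to compare against. I will therefore assess the proposal on its own.

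Your high-level plan --- pass to the Kummer cover $y^l = f(x)$ and invoke the Riemann hypothesis for curves over finite fields --- is the right neighborhood, but there is a genuine gap in how you propose to isolate the single sum $T_1 = \sum_x \chi(f(x))$, and that gap is exactly where the sharp constant $r$ comes from. The affine point count gives
\[
\#\{(x,y): y^l = f(x)\} \;=\; p + \sum_{j=1}^{l-1} T_j,
\]
i.e.\ you only see the \emph{aggregate} of all the $T_j$. Your suggestion to replace $f$ by $f^j$ does not produce $l-1$ independent linear relations: since $\chi^i(f^j(x)) = \chi^{ij}(f(x))$, the point count for $y^l=f^j(x)$ with $\gcd(j,l)=1$ returns the \emph{same} aggregate $\sum_{i=1}^{l-1}T_i$ (the map $i \mapsto ij \bmod l$ just permutes the summands), and for $\gcd(j,l)>1$ the curve may not even satisfy your irreducibility hypothesis. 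Even the better-known twisting trick (consider $y^l = cf(x)$, multiply the count by $\bar\chi(c)$, and sum over $c\in\FF_p^\times$) only yields $|T_1| \le \max_c|\#\tilde C_c(\FF_p) - p - 1| + O(l)$, and Riemann--Hurwitz gives $2g \approx (l-1)(r-1)$, so this route delivers $|T_1| \ll (l-1)(r-1)\sqrt p$, not $r\sqrt p$. You flag this discrepancy at the end as ``delicate bookkeeping,'' but it is not a bookkeeping issue: no amount of rearranging the point-count identities will recover the constant $r$.

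The step you are missing is the factorization of the zeta function of the Kummer cover into Artin $L$-functions,
\[
Z(\tilde C, T) \;=\; Z(\mathbb P^1, T)\cdot \prod_{j=1}^{l-1} L(\chi^j, T),
\]
where each $L(\chi^j,T)$ is a polynomial whose degree is controlled by the local ramification data (roughly $r-1$, and in any case at most $r$), with $\sum_j \deg L(\chi^j,T) = 2g$. The Riemann hypothesis for $\tilde C$ forces every reciprocal root of every factor to have absolute value $\sqrt p$, and the generating-function identity $L(\chi,T) = \exp\bigl(\sum_{n\ge1} T_1^{(n)} T^n/n\bigr)$ (where $T_1^{(n)}$ is the analogous sum over $\FF_{p^n}$ via the norm-composed character) then gives $|T_1| \le (\deg L)\sqrt p \le r\sqrt p$. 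In short: you need to bound the degree of a single $L$-factor, not the genus of the whole curve. Two smaller points: the reduction ``assume $l\mid p-1$'' is vacuous, since a character of order $l$ on the cyclic group $\FF_p^\times$ forces $l\mid p-1$ automatically; and your convention $\psi(0)=1$ for trivial $\psi$ quietly reintroduces an $r$-sized error in the affine count when $f$ has roots in $\FF_p$, which should be tracked rather than folded into ``$O(r)$'' if you want the clean constant.
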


\subsection{Bohr sets}\label{BohrFacts}

The material here can be found in Section 4.4 of \cite{TV}. Suppose
$\Gamma\subset\FF_p$ and $\eps>0$ is a parameter, then the Bohr set
$B(\Gamma,\eps)$ is defined as
\[B(\Gamma,\eps)=\left\{x\in\FF_p:\left\|\frac{xr}{p}\right\|\leq \eps\text{ for
each }r\in \Gamma\right\}.\] Here $\|\cdot\|$ is the distance to the nearest
integer, which in this case will be a rational number with denominator $p$.
There are a few ways to view Bohr sets. If we let $I$ be the integer interval
$[-\eps p,\eps p]\cap\ZZ$ (thought of as a subset of $\FF_p$), then $B(\Gamma,\eps)$ consists of those elements $x\in\FF_p$ such that $x\Gamma=\{xr:r\in\Gamma\}\subset I$. Since $\|\theta\|\approx |e^{2\pi i \theta}-1|$, another way to view $B(\Gamma,\eps)$ is as the set of $x\in\FF_p$
such that $|e_p(xr)- 1|\ll\eps$ for $r\in\Gamma$. In this way, $B(\Gamma,\eps)$
is approximately the kernel of the homomorphism $T:\FF_p\to \TT^d$ given by
$T(x)=(e_p(rx))_{r\in\Gamma}$. Since $\FF_p$ has no non-trivial additive
subgroups, Bohr sets are often used as a close approximation.

We have the following estimates on the size of a Bohr set.
\begin{Lemma}
\label{BohrSize}
Let $\Gamma\subset\FF_p$ with $|\Gamma|=d$ and $\eps>0$. Then
\[|B(\Gamma,\eps)|\geq \eps^dp\] and \[|B(\Gamma,2\eps)|\leq 4^d|B(\Gamma,\eps)|.\]
\end{Lemma}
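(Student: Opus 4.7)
The approach is to analyze the map $T\colon \FF_p \to \TT^d$ defined by $T(x) = \left(\frac{xr}{p}\right)_{r \in \Gamma}$ (with each coordinate taken mod $1$), under which $B(\Gamma, \eps)$ is precisely the preimage of the cube $[-\eps, \eps]^d \subset \TT^d$. Both inequalities will then follow from pigeonhole arguments applied to the orbit $T(\FF_p)$ together with the standard difference-set trick $Q - Q \subset [-\eps,\eps]^d$ whenever $Q$ is a cube of side $\eps$.

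For the lower bound, I would tile $\TT^d$ with roughly $\eps^{-d}$ cubes of side length at most $\eps$. By pigeonhole applied to the $p$ points $\{T(x) : x \in \FF_p\}$, some cube $Q$ contains at least $p\eps^d$ of these images. For any two $x, y \in T^{-1}(Q)$, the difference satisfies $T(x - y) = T(x) - T(y) \in Q - Q \subset [-\eps,\eps]^d$, so $x - y \in B(\Gamma, \eps)$. Fixing one representative $y$ in the popular cube and letting $x$ range over $T^{-1}(Q)$ produces at least $p\eps^d$ distinct elements of $B(\Gamma, \eps)$.

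For the doubling estimate, I would cover the enlarged box $[-2\eps, 2\eps]^d \subset \TT^d$ by $4^d$ translates $U_1, \ldots, U_{4^d}$ of the small cube $[-\eps/2, \eps/2]^d$, with centers on the grid $\eps\ZZ^d \cap [-2\eps,2\eps]^d$. Letting $S_i = T^{-1}(U_i) \cap \FF_p$, observe that $B(\Gamma, 2\eps) = \bigcup_i S_i$. If $S_i$ is nonempty, pick any $y_i \in S_i$; then for every $x \in S_i$, $T(x - y_i) \in U_i - U_i \subset [-\eps,\eps]^d$, so $x - y_i \in B(\Gamma, \eps)$ and hence $S_i \subset y_i + B(\Gamma, \eps)$. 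This gives $|S_i| \leq |B(\Gamma, \eps)|$, and summing over the $4^d$ covering sets yields $|B(\Gamma, 2\eps)| \leq 4^d |B(\Gamma, \eps)|$.

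The main technical nuisance is pinning down the exact constant $\eps^d$ in the lower bound, since a crude partition of $\TT^d$ into cubes of side exactly $\eps$ loses a factor of up to $2^d$ when $1/\eps$ is not an integer. I would handle this either by an averaging/random-translate argument (integrating an indicator of a small cube over shifts in $\TT^d$ recovers the clean volume bound) or by noting that in the regime of interest $\eps$ is small and any rounding error can be absorbed, since the lemma is only used qualitatively in what follows.
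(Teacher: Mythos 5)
The paper does not prove Lemma \ref{BohrSize} itself; it cites Section 4.4 of [TV], where the statement is established by exactly the torus-embedding pigeonhole/covering argument you describe. Your proof is therefore the intended one, and both halves are essentially correct. For the lower bound, the clean way to avoid the tiling constant you flag is the averaging version you already mention: for each $x$, the indicator that $T(x)$ lies in a random translate $\theta+[0,\eps]^d$ has expectation $\eps^d$ over $\theta\in\TT^d$, so some translate $Q$ has $|T^{-1}(Q)|\geq\eps^d p$, and fixing $y_0\in T^{-1}(Q)$ the injection $x\mapsto x-y_0$ sends $T^{-1}(Q)$ into $B(\Gamma,\eps)$ because $Q-Q\subset[-\eps,\eps]^d$.

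One small slip in the doubling step: the grid $\eps\ZZ^d\cap[-2\eps,2\eps]^d$ has $5$ points per coordinate (namely $-2\eps,-\eps,0,\eps,2\eps$), so it yields $5^d$ translates of $[-\eps/2,\eps/2]^d$, not $4^d$. To get $4^d$ you should shift the grid by $\eps/2$ in each coordinate, i.e.\ take centers at $\{-3\eps/2,-\eps/2,\eps/2,3\eps/2\}^d$, which still covers $[-2\eps,2\eps]^d$. With that correction, each $U_i$ still has diameter $\eps$ in each coordinate, so $U_i-U_i\subset[-\eps,\eps]^d$ and $|S_i|\leq|B(\Gamma,\eps)|$, giving the claimed $4^d$ bound. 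You should also note the degenerate case $2\eps\geq 1/2$: there $B(\Gamma,2\eps)=\FF_p$, but the lower bound already gives $4^d|B(\Gamma,\eps)|\geq 4^d\eps^d p> p$, so the inequality holds trivially.
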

Since $B(\Gamma,\eps)+B(\Gamma,\eps)\subset B(\Gamma,2\eps)$ by the triangle
inequality, we can immediately deduce the following bound.
\begin{Corollary}
\label{BohrDoubling}
Let $\Gamma\subset\FF_p$ with $|\Gamma|=d$ and $\eps>0$. Then
\[|B(\Gamma,\eps)+B(\Gamma,\eps)|\leq 4^d|B(\Gamma,\eps)|.\]
\end{Corollary}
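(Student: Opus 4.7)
The plan is immediate from the two ingredients supplied just before the statement: the inclusion $B(\Gamma,\eps)+B(\Gamma,\eps)\subset B(\Gamma,2\eps)$ (which is noted in the paragraph preceding the corollary) and the size bound $|B(\Gamma,2\eps)|\leq 4^d|B(\Gamma,\eps)|$ from Lemma \ref{BohrSize}. Stringing these together yields the claim in one line, so the corollary is essentially a restatement of the doubling part of Lemma \ref{BohrSize}.

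The only thing worth spelling out is the inclusion. Given $x,y\in B(\Gamma,\eps)$ and any $r\in\Gamma$, the triangle inequality for the distance-to-nearest-integer norm gives
\[
\left\|\frac{(x+y)r}{p}\right\|\leq \left\|\frac{xr}{p}\right\|+\left\|\frac{yr}{p}\right\|\leq 2\eps,
\]
so $x+y\in B(\Gamma,2\eps)$. Hence $B(\Gamma,\eps)+B(\Gamma,\eps)\subset B(\Gamma,2\eps)$, and applying Lemma \ref{BohrSize} to the right-hand side yields $|B(\Gamma,\eps)+B(\Gamma,\eps)|\leq |B(\Gamma,2\eps)|\leq 4^d|B(\Gamma,\eps)|$.

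There is no real obstacle here; the content is entirely contained in the covering lemma for Bohr sets (Lemma \ref{BohrSize}), whose proof is the genuinely nontrivial input but is taken as given. In particular, this corollary tells us Bohr sets of bounded rank have bounded doubling, which is the feature that later allows sum-product machinery and Plünnecke-type inequalities to be applied inside the Burgess-style argument for Theorem \ref{BurgessBohr}.
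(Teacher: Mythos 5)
Your proof is correct and matches the paper's own approach exactly: the paper notes the inclusion $B(\Gamma,\eps)+B(\Gamma,\eps)\subset B(\Gamma,2\eps)$ via the triangle inequality for $\|\cdot\|$ and then invokes the doubling estimate $|B(\Gamma,2\eps)|\leq 4^d|B(\Gamma,\eps)|$ from Lemma \ref{BohrSize}. Your spelled-out verification of the inclusion is just the detail the paper leaves implicit.
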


Given $\Gamma\subset \FF_p$, there are certain values of $\eps$ for which
$|B(\Gamma,\eps+\kappa)|$ varies nicely for small values $\kappa$. More
precisely, we define a regular Bohr set as follows.

\begin{Definition}\label{RegularDefn}
Suppose $\Gamma\subset\FF_p$ is a set of size $d$, we say $\eps$ is a regular
value for $\Gamma$ if whenever $|\kappa|<\frac{1}{100d}$ we have 
\[1-100d|\kappa|\leq\frac{|B(\Gamma,(1+\kappa)\eps)|}{|B(\Gamma,\eps)|}\leq1+100d|\kappa|.\]
We say the Bohr set $B(\Gamma,\eps)$ is regular.
\end{Definition} 
The natural first question to ask is if a given $\Gamma$ has any regular values.
In fact, a result due to Bourgain shows that one can always find a regular value
close to any desired radius.

\begin{Lemma}[Bourgain]
\label{RegularValue}
Let $\Gamma$ be a set of size $d$ and let $\delta\in(0,1)$. There
is an $\eps\in(\delta,2\delta)$ which is regular for $\Gamma$.
\end{Lemma}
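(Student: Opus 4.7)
The plan is to control the growth of the function $\eps\mapsto |B(\Gamma,\eps)|$ across the dyadic range $(\delta,2\delta)$, using the doubling estimate $|B(\Gamma,2\eta)|\leq 4^d|B(\Gamma,\eta)|$ from Lemma \ref{BohrSize}. Pass to the logarithmic variable and set $F(t)=\log|B(\Gamma,e^t)|$. Then $F$ is non-decreasing, and the doubling estimate translates to $F(t+\log 2)-F(t)\leq 2d\log 2$. Thus the total variation of $F$ over an interval of length $\log 2$ is $O(d)$, independent of $p$, and the goal is to locate a $t\in(\log\delta,\log 2\delta)$ at which $F$ does not jump rapidly.

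Next, rephrase irregularity as a quantitative statement about $F$. Using that $\log(1+\kappa)$ is comparable to $\kappa$ for small $\kappa$, together with $\log(1+100d|\kappa|)\geq 50d|\kappa|$ whenever $|\kappa|<1/(100d)$, and noting that monotonicity of $F$ makes the regularity bound of the \emph{opposite} sign automatic for each fixed sign of $\kappa$, failure of regularity at $\eps=e^t$ produces a subinterval $J_t\ni t$ of length less than $1/(50d)$ on which $F$ increases by at least $50d\cdot |J_t|$. In short, irregularity at $t$ forces an average slope of at least $50d$ on some short interval containing $t$.

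Now suppose for contradiction that every $\eps\in(\delta,2\delta)$ is irregular. Then $\{J_t\}_{t\in(\log\delta,\log 2\delta)}$ is a cover of $(\log\delta,\log 2\delta)$ by short intervals, each contained inside $[\log\delta-1/(50d),\log 2\delta+1/(50d)]$. Apply the one-dimensional Vitali covering lemma to extract a disjoint subfamily $J_{t_1},\ldots,J_{t_N}$ with $\sum_i|J_{t_i}|\geq \tfrac{1}{3}\log 2$. Summing the increments of $F$ across these disjoint intervals and invoking its monotonicity,
\[
\tfrac{50}{3}d\log 2 \;\leq\; \sum_{i=1}^N\bigl(F(\sup J_{t_i})-F(\inf J_{t_i})\bigr) \;\leq\; F\bigl(\log 2\delta+\tfrac{1}{50d}\bigr)-F\bigl(\log\delta-\tfrac{1}{50d}\bigr),
\]
whose right-hand side is at most $3d\log 2$ after two applications of the doubling estimate from Lemma \ref{BohrSize}. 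This is a clean contradiction.

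The only real obstacle is the numerical bookkeeping: one must arrange that the slope lower bound (around $50d$) multiplied by the covering fraction ($1/3$) beats the doubling upper bound (around $2d$). The factor of $100$ built into Definition \ref{RegularDefn} provides exactly the slack needed for these constants to close, and the conversion between additive and multiplicative scales (via $\log(1+\kappa)\approx \kappa$) must be tracked carefully so that the enlarged interval on the right-hand side still lies within a dyadic doubling of the original.
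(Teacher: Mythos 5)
The paper does not actually prove this lemma; it cites it as a result of Bourgain (it is also Lemma 4.25 in Tao--Vu's \emph{Additive Combinatorics}). Your proof is correct and is, in essence, the standard covering/pigeonhole argument used in those sources: set $F(t)=\log|B(\Gamma,e^t)|$, translate irregularity at $t$ into a short interval $J_t\ni t$ on which the monotone function $F$ has average slope $\geq 50d$, apply Vitali to extract a disjoint subfamily of bad intervals covering a positive fraction of $(\log\delta,\log 2\delta)$, and contradict the doubling bound $F(t+\log 2)-F(t)\leq 2d\log 2$. All the delicate steps check out: $\log(1+100d\kappa)\geq 50d\kappa$ for $0<100d\kappa<1$, and $-\log(1-100d\kappa)\geq 100d\kappa\geq 50d\bigl(-\log(1-\kappa)\bigr)$ since $-\log(1-\kappa)\leq 2\kappa$ for $\kappa<1/2$, so both signs of $\kappa$ produce an interval of length $<1/(50d)$ with the claimed slope; and monotonicity does indeed make the opposite-sign bound automatic.

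One small numerical nit: two applications of the doubling bound over $[\log\delta-\tfrac{1}{50d},\,\log 2\delta+\tfrac{1}{50d}]$ (an interval of length $\log 2+\tfrac{1}{25d}<2\log 2$) give $4d\log 2$, not $3d\log 2$. This is harmless, since $\tfrac{50}{3}d\log 2$ (or even $10d\log 2$ with the Vitali constant $\tfrac15$) still comfortably exceeds $4d\log 2$, so the contradiction survives. You might also say a word about why a Vitali-type selection applies to an uncountable family of intervals of unbounded eccentricity about $t$ (e.g., pass to a finite subcover of a slightly smaller compact subinterval first), but this is routine bookkeeping and does not affect correctness.
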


The crucial property of regular Bohr sets is that they are almost invariant
under translation by Bohr sets of small radius. This allows us to replace a
character sum over a Bohr set by something \quote{smoother}.

\begin{Corollary}
\label{Translation}
Let $B(\Gamma,\eps)$ be a regular Bohr set with $|\Gamma|=d$. If $\eta\leq
\delta\eps/200 d$ for some $0<\delta<1$ then for any natural number $n\geq
1$ and $y_1,\ldots,y_n\in B(\Gamma,\eta)$ and we have
\[\sum_{x\in\FF_p}|\one_{B(\Gamma,\eps)}(x+y_1+\ldots+y_n)-\one_{B(\Gamma,\eps)}(x)|\leq
n\delta|B(\Gamma,\eps)|.\]
\end{Corollary}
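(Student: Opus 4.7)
The plan is to first reduce to the single-shift case $n=1$ via a standard telescoping argument, and then exploit the definition of regularity directly.

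For the reduction, I would write
\[\one_{B}(x+y_1+\cdots+y_n)-\one_{B}(x)=\sum_{j=1}^n\bigl(\one_{B}(x+y_1+\cdots+y_j)-\one_{B}(x+y_1+\cdots+y_{j-1})\bigr),\]
apply the triangle inequality, and then translate the variable $x$ in each summand to reduce the $j$-th term to $\sum_x|\one_{B}(x+y_j)-\one_{B}(x)|$. Thus it suffices to prove $\sum_x|\one_{B}(x+y)-\one_{B}(x)|\le\delta|B|$ whenever $y\in B(\Gamma,\eta)$.

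For the single-shift bound, I would observe that if $x\in B(\Gamma,\eps)$ and $x+y\notin B(\Gamma,\eps)$, then for some $r\in\Gamma$ we have $\|xr/p\|\le\eps$ but $\|(x+y)r/p\|>\eps$; since $\|yr/p\|\le\eta$, the triangle inequality for $\|\cdot\|$ forces $\|xr/p\|>\eps-\eta$, i.e.\ $x\in B(\Gamma,\eps)\setminus B(\Gamma,\eps-\eta)$. Symmetrically, the other case gives $x\in B(\Gamma,\eps+\eta)\setminus B(\Gamma,\eps)$. Therefore
\[\sum_{x\in\FF_p}|\one_{B(\Gamma,\eps)}(x+y)-\one_{B(\Gamma,\eps)}(x)|\le|B(\Gamma,\eps+\eta)|-|B(\Gamma,\eps-\eta)|.\]

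To finish, apply regularity with $\kappa=\pm\eta/\eps$. The hypothesis $\eta\le\delta\eps/200d$ makes $|\kappa|<1/100d$ licit, and the two inequalities of Definition \ref{RegularDefn} combine to give $|B(\Gamma,\eps+\eta)|-|B(\Gamma,\eps-\eta)|\le 200d(\eta/\eps)|B(\Gamma,\eps)|\le\delta|B(\Gamma,\eps)|$. Feeding this back through the telescoping yields the claimed bound $n\delta|B(\Gamma,\eps)|$. There is no real obstacle here; the only subtle point is just noticing that the symmetric difference $B(\Gamma,\eps)\triangle(B(\Gamma,\eps)-y)$ is trapped inside the annulus $B(\Gamma,\eps+\eta)\setminus B(\Gamma,\eps-\eta)$, after which regularity does all the work.
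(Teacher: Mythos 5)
Your proposal is correct and follows essentially the same route as the paper: reduce to the single shift $n=1$ by triangle inequality/telescoping, observe that the symmetric difference is trapped in the annulus $B(\Gamma,\eps+\eta)\setminus B(\Gamma,\eps-\eta)$, and then invoke regularity. The only cosmetic difference is that you apply Definition \ref{RegularDefn} at radius ratio $\kappa=\pm\eta/\eps$ while the paper uses the (slightly larger) $\kappa=\delta/200d$; both give the bound $\delta|B(\Gamma,\eps)|$ for the single-shift term.
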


\begin{proof}
By the triangle inequality it suffices to prove the result for $n=1$.
For $y=y_1$, the value of
$|\one_{B(\Gamma,\eps)}(x+y)-\one_{B(\Gamma,\eps)}(x)|$ is $0$ unless exactly 
one of $x$ and $x+y$ lies in $B(\Gamma,\eps)$ in which case there is a
contribution of $1$. However, if the latter happens then $x\in
B(\Gamma,\eps+\eta)\setminus B(\Gamma,\eps-\eta)$.
Owing to the regularity of $B(\Gamma,\eps)$, for any $y \in B(\Gamma,\eta),$
there is a contribution of at most \[\left|B\lr{\Gamma,\eps\lr{1+\frac{\delta}{200d}}}\right|-\left|B\lr{\Gamma,\eps\lr{1-\frac{\delta}{200d}}}\right|\leq
\delta\left|B(\Gamma,\eps)\right|.\]
\end{proof}

\subsection{A sum-product estimate}

In order to execute a Burgess type argument for character sums, we shall need
estimates on what is known as multiplicative energy. For two sets $A,B\subset
\FF_p$ we call \[E_\times(A,B)=|\left\{(a_1,a_2,b_1,b_2)\in A\times
A\times B\times B:a_1b_1=a_2b_2\right\}|\] the multiplicative energy
between $A$ and $B$. We observe that if \[r_\times(x)=|\{(a,b)\in A\times
B:ab=x\}|\] then \[E_\times(A,B)=\sum_{x\in
\FF_p}r_\times(x)^2.\] These quantities appear regularly in additive
combinatorics and are closely related to $|A\cdot B|$. Specifically, we shall
need to bound the multiplicative energy between two Bohr sets. To achieve this, make use of the following estimate from \cite{R}\footnote{Recently, Rudnev's sum-product estimate was
improved in \cite{RNRS}. Turning this bound into an energy estimate give a small
improvement to Theorem \ref{BurgessBohr}. However, sum-product estimates are
still far from optimal and an approach incorporating the structure of Bohr sets
would likely be more effective.}.
The estimate presented here is not explicitly mentioned, but it is proved on the way
to proving Theorem 1 of that article.

\begin{Theorem}[Rudnev]
\label{EnergyEstimate}
Let $A\subset \FF_p$ satisfy $|A|<\sqrt p$. Then \[E_\times(A)\ll
|A||A+A|^{7/4}\log |A|.\]
\end{Theorem}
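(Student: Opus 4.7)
The plan is to reduce the bound on multiplicative energy to a point-plane incidence count in $\FF_p^3$ and to invoke Rudnev's incidence theorem, following the template used in his sum-product argument in \cite{R}; the statement here is not written explicitly in that paper, but appears as an intermediate step on the way to his main theorem.

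First I would expand $E_\times(A) = \sum_{x} r_\times(x)^2$ and perform a dyadic decomposition of the support of $r_\times$, partitioning $\FF_p$ into level sets $S_\tau = \{x : r_\times(x) \sim \tau\}$. At most $O(\log|A|)$ dyadic levels contribute, so it suffices to bound $\tau^2 |S_\tau|$ uniformly in $\tau$ and absorb the number of levels into the stated $\log|A|$ factor. Up to constants, $\tau^2|S_\tau|$ counts multiplicative quadruples $(a_1, b_1, a_2, b_2) \in A^4$ with $a_1 b_1 = a_2 b_2 \in S_\tau$.

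Next I would convert the quadruple count into a point-plane incidence problem. Introducing $s = a_2 - a_1$ and $t = b_2 - b_1$, both in $A-A$, the equation $a_1 b_1 = a_2 b_2$ rearranges to the linear constraint $t \cdot a_1 + s \cdot b_1 + s t = 0$, which defines a plane in $\FF_p^3$ in the variables $(a_1, b_1, 1)$, parametrized by $(s, t) \in (A-A)^2$. Quadruples therefore correspond to incidences between the point grid $P = A \times A \times \{1\}$ (of size $|A|^2$) and a plane family $\Pi$ of size at most $|A-A|^2$. A single application of Pl\"unnecke's inequality $|A-A| \leq |A+A|^2/|A|$ then re-expresses $|\Pi|$ in terms of $|A+A|$.

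Finally I would invoke Rudnev's point-plane incidence theorem. The hypothesis $|A| < \sqrt p$ guarantees $|P| < p \ll p^2$, well within the theorem's range, and the non-concentration parameter is controlled because any line in $\FF_p^3$ intersects the grid $A \times A \times \{1\}$ in at most $|A|$ points. Combining the resulting incidence bound with the Pl\"unnecke step and summing over the dyadic levels should deliver $E_\times(A) \ll |A| |A+A|^{7/4} \log|A|$. The main obstacle is matching the $7/4$ exponent precisely: extracting it requires the non-degenerate regime of Rudnev's estimate (the $|P|^{3/4}|\Pi|^{3/4}$-type term) together with careful accounting of collinear concentrations among the $(s,t)$-planes that might otherwise force a worse exponent. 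The degenerate configurations where $s = 0$ or $t = 0$ collapse to counts of order $|A|^3$, comfortably absorbed into the main term.
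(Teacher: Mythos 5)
First, a point of orientation: the paper does not prove this theorem. It is imported from Rudnev's article \cite{R}, with the explicit caveat that the statement ``is not explicitly mentioned, but it is proved on the way to proving Theorem 1 of that article.'' So there is no proof in the present paper to compare against; what follows is an assessment of your argument on its own terms.

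Your reduction has a structural flaw that prevents it from producing the $7/4$ exponent, or indeed anything nontrivial. You place the points at $P = A \times A \times \{1\}$, so every point lies in the single affine plane $z = 1$; the planes $\pi_{s,t}\colon tX + sY + stZ = 0$ meet that plane in lines, and the three-dimensional picture collapses to a two-dimensional point-line incidence problem in disguise. Rudnev's point-plane incidence theorem gives nothing in this degenerate situation: the maximum number of collinear points in $P$ is exactly $|A|$ (take the line $\{x = a_0,\ z = 1\}$ for any $a_0 \in A$), and with $|P| = |A|^2$, $|\Pi| \leq |A-A|^2$, $k = |A|$ the theorem yields
\[
I(P,\Pi) \ll |\Pi|\sqrt{|P|} + k|\Pi| \ll |A|\,|A-A|^2 \geq |A|^3,
\]
which is weaker than the trivial bound $E_\times(A) \leq |A|^3$. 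Your remark that $k \leq |A|$ means the non-concentration parameter is ``controlled'' is backwards: $k$ of this size is precisely where the incidence theorem loses all content. Relatedly, there is no $|P|^{3/4}|\Pi|^{3/4}$ term in Rudnev's bound --- the two terms are $|\Pi||P|^{1/2}$ and $k|\Pi|$ (when $|P| \leq |\Pi|$) --- so the ``non-degenerate regime'' you invoke does not exist. Finally, note the chronology: the point-plane incidence theorem appeared on the arXiv in 2014, two years after \cite{R}, so \cite{R} neither could nor does argue this way; its proof runs through a different, more elementary incidence route. A genuine point-plane proof of an energy estimate of this type does exist --- it is essentially the content of \cite{RNRS}, which the paper's own footnote cites as improving on \cite{R} --- but it requires lifting the point set off a single plane (onto a suitable quadric, say) so that the collinearity parameter is genuinely small, and it in fact delivers a stronger exponent than $7/4$.
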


\section{The P\'olya-Vinogradov Argument}
The P\'olya-Vinogradov argument is an effective way of obtaining good character
sum estimates over sets whose Fourier transform has a small $L^1$ norm. Indeed,
suppose $A\subset \FF_p$, then by Parseval's identity and the Gauss sum estimate we have
\[\left|\sum_{a\in A}\chi(a)\right|=\left|\frac{1}{p}\sum_{x\in\FF_p}\hat{\one_A}(x)\tau(\chi,-x)\right|\leq
\sqrt p\|\hat{\one_A}\|_1.\] One can get a fairly strong
estimate on this $L^1$ norm of Bohr sets. We do so now and establish
Theorem \ref{PVBohr}.

\begin{proof}[Proof of Theorem \ref{PVBohr}]
Write $\Gamma=\{r_1,\ldots, r_d\}$ and
$\rr=(r_1,\ldots,r_d)$. Since $x\in B$ if and only if \[r x\in[-\eps p,\eps
p]=I\] for each $r\in \Gamma$, we have

\begin{align*}
\hat{\one_B}(y)&=\sum_{x\in B}e_p(-yx)\\
&=\sum_{x\in \FF_p}\prod_{k=1}^d\one_I(xr_k)e_p(-yx)\\
&=\frac{1}{p^d}\sum_{x\in\FF_p}\prod_{k=1}^d\sum_{v_k\in\FF_p}\hat{\one_I}(v_k)e_p(v_kr_kx)e_p(-yx)\\
&=\frac{1}{p^d}\sum_{\vv\in\FF_p^d}\hat{\one_{I^d}}(\vv)\sum_{x\in\FF_p}e_p(x(\vv\cdot\rr-y))\\
&=\frac{1}{p^{d-1}}\sum_{\substack{\vv\in\FF_p^d\\\vv\cdot\rr=y}}\hat{\one_{I^d}}(\vv).
\end{align*}
Here we have set $I^d=I\stackrel{\times d}{\times\ldots\times} I$ and
\[\hat{\one_{I^d}}((v_1,\ldots,v_d))=\hat{\one_I}(v_1)\cdots\hat{\one_I}(v_d).\]
Plugging this in, we obtain \[\|\hat{\one_B}\|_1\leq
\frac{1}{p^d}\sum_{y\in\FF_p}\sum_{\substack{\vv\in\FF_p^d\\\vv\cdot\rr=y}}|\hat{\one_{I^d}}(\vv)|
=\frac{1}{p^d}\sum_{\vv\in\FF_p^d}|\hat{\one_{I^d}}(\vv)|=\|\hat{\one_I}\|_1^d.\]

As in the classical proof of the P\'olya-Vinogradov inequality,
\[|\hat{\one_I}(v)|=\left|\sum_{k=-N}^{N}e_p(-kv)\right|=\left|\sum_{k=0}^{2N+1}e_p(-kv)\right|
=\left|\frac{e_p(v(2N+2)-1}{e_p(v)-1}\right|\ll\frac{p}{v}.\]
It follows that $\|\hat{\one_I}\|_1\ll \log p$ and the theorem is proved.
\end{proof}

\begin{rem}
If one takes $\Gamma=\{1\}$ and $\eps=N/p$ for some positive integer $N$ then
$B(\Gamma,\eps)=[-N,N]$, thought of as a subset of $\FF_p$. This recovers the
classical P\'olya-Vinogradov estimate \[\sum_{|n|\leq N}\chi(n)\ll\sqrt p\log
p.\]
\end{rem}
\section{The Burgess Argument}

In this section we prove Theorem \ref{BurgessBohr}. The method is the same as in
the proof of Burgess' estimate for character sums over an interval, which can be
found in Chapter 12 of \cite{IK}. The main difference lies in estimating the multiplicative energy
between two Bohr sets and for this we use the sum-product result quoted in
Section 2. Sum-product estimates were used for the same purpose in \cite{C} with methods taken from \cite{KS}. It is likely that the argument presented here is not efficient.
Indeed, Bohr sets are highly structured and the current sum-product estimates
are expected to be suboptimal. For example, one of the energy estimates proved
in \cite{C} was improved in \cite{K} using the geometry of numbers. We were unnable to adapt that argument to the present
situation. 

First, we establish a general version of Burgess' argument which is an
application of H\"older's inequality and Weil's bound.\begin{Lemma}
\label{BasicBurgess}
Let $A,B,C\subset \FF_p$ and suppose $\chi$ is a non-trivial multiplicative
character. Define \[r(x)=|\{(a,b)\in A\times B:ab=x\}|.\] Then for any
positive integer $k$, we have the estimate
\begin{align*}
\sum_{x\in\FF_p}r(x)\left|\sum_{c\in
C}\chi(x+c)\right|&\leq(|A||B|)^{1-1/k}E_\times(A,A)^{1/4k}E_\times(B,B)^{1/4k}\cdot\\
&\cdot\lr{|C|^{2k}2k\sqrt p+(2k|C|)^kp}^{1/2k}.
\end{align*}
\end{Lemma}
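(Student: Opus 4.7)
The plan is the classical Burgess scheme: split the sum by H\"older's inequality into three pieces, controlled respectively by the mass $\sum_x r(x) = |A||B|$, the multiplicative energy $\sum_x r(x)^2 = E_\times(A,B)$, and a $2k$-th moment of the inner character sum estimated via Weil. The H\"older exponents are forced by the target: the outer power on $(|A||B|)$ is $1-1/k$ and on the character-sum factor is $1/(2k)$, so we must use exponents $\bigl(\tfrac{k}{k-1},\,2k,\,2k\bigr)$, which satisfy $(1-1/k)+\tfrac{1}{2k}+\tfrac{1}{2k}=1$.

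First, writing $F(x)=\bigl|\sum_{c\in C}\chi(x+c)\bigr|$ and factoring $r(x)=r(x)^{(k-1)/k}\cdot r(x)^{1/k}$, H\"older's inequality with these three exponents gives
\[\sum_x r(x)F(x)\leq\left(\sum_x r(x)\right)^{1-1/k}\left(\sum_x r(x)^2\right)^{1/(2k)}\left(\sum_x F(x)^{2k}\right)^{1/(2k)}.\]
The first factor is $(|A||B|)^{1-1/k}$, and the second equals $E_\times(A,B)^{1/(2k)}$. I would then split the mixed energy by Cauchy-Schwarz: writing $E_\times(A,B)=\sum_t\phi(t)\psi(t)$ with $\phi(t)=|\{(a,a')\in A^2:a=ta'\}|$ and $\psi(t)=|\{(b,b')\in B^2:b'=tb\}|$ (so that $a_1b_1=a_2b_2$ reads $a_1/a_2=b_2/b_1$), Cauchy-Schwarz yields $E_\times(A,B)\leq E_\times(A,A)^{1/2}E_\times(B,B)^{1/2}$, producing the desired $E_\times(A,A)^{1/(4k)}E_\times(B,B)^{1/(4k)}$.

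For the third factor, I expand
\[F(x)^{2k}=\sum_{\substack{c_1,\ldots,c_k\in C\\c'_1,\ldots,c'_k\in C}}\chi\bigl(f_{\cc,\cc'}(x)\bigr),\qquad f_{\cc,\cc'}(x)=\prod_{i=1}^k(x+c_i)\prod_{j=1}^k(x+c'_j)^{\ell-1},\]
where $\ell=\ord(\chi)$. Summing in $x$ and invoking Weil's theorem, each tuple for which $f_{\cc,\cc'}$ is not an $\ell$-th power in $\bar{\FF_p}[x]$ contributes at most $2k\sqrt p$, giving the $|C|^{2k}\cdot 2k\sqrt p$ term. The degenerate tuples — those for which the multiplicities $m_c$ in $\{c_i\}$ and $m'_c$ in $\{c'_j\}$ agree modulo $\ell$ at every point $c$ — are counted by a block-pairing argument: partition the $2k$ indices into groups of common multiplicity (at most $(2k-1)!!\leq(2k)^k$ partitions) and assign a value from $C$ to each group ($|C|^k$ choices), yielding at most $(2k|C|)^k$ degenerate tuples, each contributing the trivial bound $p$. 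Summing these gives the factor $\bigl(|C|^{2k}\,2k\sqrt p+(2k|C|)^k p\bigr)^{1/(2k)}$.

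The only delicate step is the degenerate-tuple count in the Weil application; the crude pairing bound $(2k|C|)^k$ suffices because the lemma allows that form, though sharper counts appear in the literature (cf.\ Chapter 12 of \cite{IK}). The H\"older split and the Cauchy-Schwarz splitting of the mixed energy are routine bookkeeping once the exponents are set up.
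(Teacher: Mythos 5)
Your proposal is correct and follows essentially the same route as the paper: the same H\"older split with exponents $(k/(k-1),2k,2k)$, the same identification of the three factors with $|A||B|$, $E_\times(A,B)$, and the $2k$-th moment, the same Cauchy--Schwarz splitting of the mixed energy, and the same Weil plus degenerate-tuple count. The only cosmetic difference is that you realize $\bar\chi$ as $\chi^{\ell-1}$ where the paper uses $\chi^{p-2}$ (both valid, and both lead to the same ``every repeated value must occur with multiplicity pattern killed mod $\ell$'' analysis), and you make the mixed-energy Cauchy--Schwarz explicit via the functions $\phi,\psi$ where the paper simply cites it; neither affects the argument.
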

\begin{proof}
Call the left hand side above $S$. Applying by H\"older's
inequality \begin{align*}
|S|&\leq\lr{\sum_{x\in\FF_p}r(x)}^{1-1/k}\lr{\sum_{x\in\FF_p}r(x)^2}^{1/2k}\lr{\sum_{x\in\FF_p}\left|\sum_{c\in
C}\chi(x+c)\right|^{2k}}^{1/2k}\\
&=T_1^{1-1/k}T_2^{1/2k}T_3^{1/2k}.
\end{align*}
Now $T_1$ is precisely $|A||B|$ and $T_2$ is the multiplicative energy
$E_\times(A,B)$. By the Cauchy-Schwarz inequality, we have
\[E_\times(A,B)\leq\sqrt{E_\times(A,A)E_\times(B,B)}.\]
Expanding $T_3$ and using that
$\bar\chi(y)=\chi(y^{p-2})$, we have
\begin{align*}
T_3&=\sum_{c_1,\ldots,c_{2k}\in
C}\sum_x\chi((x-c_1)\cdots(x-c_k)(x-c_{k+1})^{p-2}\cdots(x-c_{2k})^{p-2})\\
&=\sum_{\cc\in
C^{2k}}\sum_x\chi(f_{\cc}(x)).
\end{align*}
Here $f_{\cc}(t)$ is the
polynomial
\[f_{\cc}(t)=(t-c_1)\cdots(t-c_k)(t-c_{k+1})^{p-2}\cdots(t-c_{2k})^{p-2}.\]
By Weil's theorem, $\sum_x\chi(f_{\cc}(x))\leq 2k\sqrt p$ unless $f_{\cc}$ is an
$l$'th power, where $l$ is the order of $\chi$. If any of the roots $c_i$ of
$f_{\cc}$ is distinct, it occurs with multiplicity 1 or $p-2$, both of which are
prime to $l$ since $l$ divides $p-1$. Hence $f_{\cc}$ is an $l$'th power only
provided all of its roots can be grouped into pairs. So, for all but at most
$\frac{(2k)!}{2^k k!}\leq (2k|C|)^k$ vectors $\cc$, we have the estimate
$2k\sqrt p$ for the inner sum. For the remaining $\cc$ we bound the sum
trivially by $p$. Hence \[T_3\leq |C|^{2k}2k\sqrt p+(2k|C|)^kp.\]
\end{proof}

We now prove Theorem \ref{BurgessBohr}.
 
\begin{proof}
Suppose $\Gamma\subset\FF_p$ has size $d$ and $\eps$ is a regular value for
$\Gamma$. We may as well assume that $\Gamma\neq 0$ for otherwise $B=\FF_p$ and
the result is trivial. Write $B=B(\Gamma,\eps)$ and let $\chi$ be a non-trivial
character of $\FF_p^\times$. Then we wish to estimate \[S(\chi)=\sum_{x\in
B}\chi(x).\] We begin by first using Corollary \ref{Translation}. Let $\eta=
p^{-1/k}\eps/(200 d)$ and let $y\in B(\Gamma,\eta)$. For any natural
number $n\leq p^{1/2k}$ we have

\begin{align*}
S(\chi)&=\sum_{x\in\FF_p}\one_B(x)\chi(x)\\
&=\sum_{x\in\FF_p}\one_B(x+ny)\chi(x)+O\lr{n|B|p^{-1/k}}\\
&=\sum_{x\in B}\chi(x-ny)+O\lr{n|B|p^{-1/k}}.
\end{align*}

Averaging this over all values $1\leq n\leq p^{1/2k}$ and over
all values $y\in B'= B(\Gamma,\eta)\setminus\{0\}$ we obtain
\[S(\chi)\ll\frac{1}{p^{1/2k}|B'|}\sum_{x\in B}\sum_{y\in
B'}\sum_{1\leq n\leq p^{1/2k}}\chi(x-ny)+O\lr{|B|p^{-{1/2k}}}.\] It remains to
estimate \[T(\chi)\ll\frac{1}{p^{1/2k}|B'|}\sum_{x\in B}\sum_{y\in
B'}\sum_{1\leq n\leq p^{1/2k}}\chi(x-ny).\] 

We begin by assuming that $|B|<\sqrt p$. Then, applying Lemma \ref{BasicBurgess}
(where $r(x)$ is now the number of ways of writing $x$ as $ab$ with $a\in B$ and $b\in (B')^{-1}$), we have
\begin{align*} |T(\chi)|&\ll
\frac{1}{p^{1/2k}|B'|}\sum_{x\in\FF_p}r(x)\left|\sum_{1\leq n\leq
p^{1/2k}}\chi(x-n)\right|\\
&\leq\frac{(|B||B'|)^{1-1/k}E_\times(B,B)^{1/4k}E_\times(B',B')^{1/4k}}{p^{1/2k}|B'|}\cdot\\
&\qquad\cdot\lr{2kp^{3/2}+(2k)^kp^{3/2}}^{1/2k}\\
&\leq|B|(|B||B'|)^{-{3/4k}}(|B+B||B'+B'|)^{7/16k}(\log
p)^{1/2k}\sqrt k p^{1/4k}\end{align*} after applying
Theorem \ref{EnergyEstimate}. Applying Corollary \ref{BohrDoubling}, we get the
bound \[|T(\chi)|\ll
|B|(|B||B'|)^{-{5/16k}}4^{7d/8k}(\log
p)^{1/2k}\sqrt k p^{1/4k}.\]
Using Lemma \ref{BohrSize}, \[|B'|\geq
\eta^dp=\lr{\frac{\eps}{p^{1/k}200d}}^dp\] so that 
\[|T(\chi)|\ll_{d,k}|B|\cdot
p^{{5d/16k^2}+o(1)}\lr{\frac{|B|}{\eps^dp}}^{5/16k}\lr{\frac{|B|^{5/2}}{p}}^{-1/4k}.\]

Now if $|B|\geq \sqrt p$, first split $B$ into disjoint sets $B_i$ with
$\sqrt p\ll|B_i|<\sqrt p$. Then
\[|T(\chi)|\ll\frac{|B|}{\sqrt
p}\cdot\frac{1}{p^{1/2k}|B'|}\max_i\sum_{x\in B_i}\sum_{y\in
B'}\left|\sum_{1\leq n\leq p^{1/2k}}\chi(x-ny)\right|.\] Proceeding as
before, this time bounding $|B_i|< \sqrt p$ and $|B_i+B_i|\leq |B+B|$, we obtain
\begin{align*}
|T(\chi)|&\ll|B|(\sqrt p|B'|)^{-3/4k}(|B+B||B'+B'|)^{7/16k}(\log
p)^{1/2k}\sqrt k p^{1/4k}\\
&=\lr{\frac{|B|}{\sqrt
p}}^{3/4k}\lr{|B|(|B||B'|)^{-3/4k}(|B+B||B'+B'|)^{7/16k}}\cdot\\
&\qquad\cdot\lr{(\log p)^{1/2k}\sqrt k p^{1/4k}}\\
&\ll_{d,k}|B|\cdot
p^{{5d/16k^2}+o(1)}\lr{\frac{|B|}{\eps^dp}}^{5/16k}\lr{\frac{p}{|B|}}^{-1/8k}.
\end{align*}
\end{proof}

It is worth remarking that the Burgess estimate just proved gives
a genuine improvement over the P\'olya-Vinogradov estimate in some cases. To see
this, we need a Bohr set whose size is $|B|\approx \eps^d p\approx p^\gamma$
with $2/5<\gamma<1/2$. To find such a set, we need only note that the bound
in Lemma \ref{BohrSize} is sharp on average. Averaging over all subsets of
$\FF_p$ of size $d$ we have (where $I$ is the interval $[-\eps p,\eps p]$)
\begin{align*}
\frac{1}{\binom{p}{d}}\sum_{|A|=d}|B(A,\eps)|&=\frac{1}{\binom{p}{d}}\sum_{|A|=d}\sum_{x\in\FF_p}\prod_{a\in
A}\one_I(ax)\\
&=\frac{1}{\binom{p}{d}}\sum_{|A|=d}\sum_{x\in\FF_p^\times}\prod_{a\in
A}\one_{x^{-1}I}(a)+O(1)\\
&=\frac{1}{\binom{p}{d}}\sum_{x\in\FF_p^\times}\sum_{|A|=d}\prod_{a\in
A}\one_{x^{-1}I}(a)+O(1).
\end{align*}The inner sum vanishes unless $A\subset x^{-1}I$ in which case it contributes $\binom{|I|}{d}$. Thus the total sum is roughly
$\binom{|I|}{d}\binom{p}{d}^{-1}p\asymp \eps^dp$. It follows that for the
typical choice of $A$ of size $d$ and appropriate choice of $\eps$, which we can
take to be regular by Lemma \ref{RegularValue}, we find a regular Bohr set with
size in the desired range.

\section{Application to Polynomial Recurrence}

We are now going to prove Theorem \ref{KthPowers} and Theorem
\ref{PrimitiveRoots}. Their proofs will follow the standard method of counting
with characters. First we prove an analog of Schmidt's theorem for
squares. This proof is quite simple and doesn't need character sums, but it will give a good idea of what to aim for when we move to higher powers. 

Let $\Gamma\subset \FF_p$ be a set of size $d$ and let
$\eps>0$ be a parameter. Then $B=B(\Gamma,\eps)$ contains a non-zero square
provided $\eps^{2d}p>1$. To see this, observe that Bohr sets have the dilation
property $xB=B(x^{-1}\Gamma,\eps)$, which follows immediately from the
definition of a Bohr set. If the non-zero elements of $B$ are all
non-squares, then for any non-square element $x$, $xB(\Gamma,\eps)\cap
B(\Gamma,\eps)=\{0\}$. But this intersection contains $B(\Gamma\cup x^{-1}\Gamma,\eps)$ which has size at least $\eps^{2d}p$ by Lemma
\ref{BohrSize} yielding a contradiction. It follows that there is a non-zero
integer $1\leq a<p$ such that \[\max_{r\in \Gamma}\left\{\left\|a^2\frac{r}{p}\right\|\right\}\ll p^{-1/2d}.\]

The above argument does not immediately generalize to higher powers because
there is no dichotomy - an element can be in any of the $k$ cosets of the set of $k$'th
powers. Instead, we will use Theorem \ref{PVBohr} to find higher powers and
primitive roots in Bohr sets.

\begin{proof}[Proof of Theorem \ref{KthPowers}]Write $B$ for $B(\Gamma,\eps)$.
Observe that when $(k,p-1)=l$ then the $k$'th powers are the same as the $l$'th powers. So we suppose $k|(p-1)$ and $K$ is the subgroup of $\FF_p^\times$ consisting of the
$k$'th powers. This group has index $k$. The problem is then showing that
$B(\Gamma,\eps)\cap K$ is non-empty. Let $K^\perp$ be the group of multiplicative characters
which restrict to the trivial character on $K$. This group has size
$|K^\perp|=k$. The Poisson Summation Formula, which can be found in Chapter
4 of \cite{TV}, states that \[\one_K(x)=\frac{1}{k}\sum_{\chi\in K^\perp}\chi(x).\] Thus,
\[|K\cap B|=\frac{1}{k}\sum_{\chi\in
K^\perp}\sum_{b\in
B}\chi(b).\] After extracting the contribution from the trivial
character $\chi_0$ this we have \[\left||K\cap
B|-\frac{|B|}{k}\right|\leq \max_{\chi}|S(\chi)|\]
where $S(\chi)=\sum_{b\in B} \chi(b)$ and the maximum is taken over all
non-trivial characters $\chi\in K^\perp$. Thus if we can show that the maximum
value of $|S(\chi)|$ is at most $\frac{|B|}{k}$ then $B$ must contain an
element of $K$. By Theorem \ref{PVBohr}, $B$ contains a $k$'th powers provided 
$|B|\gg_d kp^{1/2}(\log p)^d$ which is certainly the case when
$\eps^d\gg_d kp^{-1/2}(\log p)^{d}$ in view of Lemma \ref{BohrSize}.
Thus \[\max_{r\in \Gamma}\left\{\left\|x^k\frac{r}{p}\right\|\right\}\ll_{d}
p^{{-1/2d}}\log p\cdot k^{1/d}.\]
\end{proof}

We now turn to primitive roots. 
\begin{proof}[Proof of Theorem \ref{PrimitiveRoots}]
We can also find primitive roots in a Bohr set. Recall that the group
$\FF_p^\times$ is cyclic and a primitive element of $\FF_p$ is a generator of
this group. Denote the primitive roots of $\FF_p$ by $\cP$. The
characteristic function of $\cP$ has a nice expansion in terms of characters,
due to Vinogradov, see excercise 5.14 of \cite{LN}:
\[\one_\cP(x)=\frac{\phi(p-1)}{p-1}\sum_{d|(p-1)}\frac{\mu(d)}{\phi(d)}\sum_{\chi_d}\chi(x)\]
where $\phi$ is Euler's totient function and $\sum_{\chi_d}$ is the sum over all
characters with order exactly $d$. Summing over the elements of a Bohr
set $B$ and extracting the contribution from the trivial character, we obtain
\[\left||B\cap \cP|-|B|\frac{\phi(p-1)}{p-1}\right|\ll_d \sqrt p(\log p)^d.\] We
deduce that $B$ will contain a primitive root whenever $\eps\gg
\frac{p^{1/2d}}{\phi(p-1)^{1/d}}\cdot \log p$. Thus there is a
primitive root $1<x<p$ with \[\max_{r\in \Gamma}\left\{\left\|x\frac{r}{p}\right\|\right\}\ll_{d}
\frac{p^{1/2d}\log p}{\phi(p-1)^{1/d}}.\]
\end{proof}

We close by mentioning that use of Theorem \ref{BurgessBohr} would allow for
smaller choices of $\eps$ but for the factor $\lr{\frac{|B|}{\eps^dp}}^k$
appearing in the estimate. As we mentioned in the preceding section, this
factor is usually harmless, but we wanted uniform results for all
sets $\Gamma$ which comes more easily by way of Theorem \ref{PVBohr}. 

\end{document}